\newcommand\R{{\mathbb{R}}}
\newcommand\I{{\mathbf{I}}}
\renewcommand\P{{\mathbf{P}}}
\newcommand\Prob{{\mathbf{P}}}
\newcommand\E{{\mathbf{E}}}
\newcommand\Var{\mathbf{Var}}
\newcommand\BG{{\mathbf G}}
\newcommand\BJ{{\mathbf J}}
\newcommand\CB{{\mathcal B}}
\newcommand\CC{{\mathcal C}}
\newcommand\CM{{\mathcal M}}
\newcommand\CN{{\mathcal N}}
\newcommand\CP{{\mathcal P}}
\newcommand\CS{{\mathcal S}}
\newcommand*{\QED}{\hfill\ensuremath{\square}}
\def\x{{\bf X}}
\def\y{{\bf Y}}
\theoremstyle{plain}
  \newtheorem{theorem}{Theorem}[section]
  \newtheorem{conjecture}[theorem]{Conjecture}
  \newtheorem{lemma}[theorem]{Lemma}
\theoremstyle{definition}
  \newtheorem{definition}[theorem]{Definition}
\numberwithin{equation}{section}
\title[Dictionary learning with few samples]{Dictionary learning with few samples and matrix concentration}
\author{Kyle Luh}
\address{Department of Mathematics, Yale University}
\email{kyle.luh@yale.edu}
\author{Van Vu}
\address{Department of Mathematics, Yale University}
\email{van.vu@yale.edu}
\keywords{Dictionary learning, matrix concentration}
\begin{document}

\begin{abstract} \vskip2mm 

Let $A$  be an $n \times n$ matrix, $X$  be an  $n \times p$  matrix and $Y = AX$. A challenging and important problem in data analysis, motivated by dictionary learning
and other practical problems, is to recover both  $A $ and $X$, given 
$Y$. Under normal circumstances, it is clear that this problem is underdetermined. However, in the case when $X$ is sparse and random, 
Spielman, Wang and Wright  showed that  one  can recover both  $A$ and $X$ efficiently from $Y$ with high probability, given that $p$ (the number of samples)  is sufficiently large. Their method works for 
$p \ge C n^2 \log^ 2 n$ and they conjectured that $p \ge C n \log n$  suffices. The bound $n \log n$ is sharp for an obvious information theoretical reason. 

In this paper, we show that $p \ge C n \log^4 n$ suffices, matching the conjectural bound up to a polylogarithmic factor. 
The core of our proof is a theorem concerning $l_1$ concentration of random matrices, which is of independent interest.

\vskip2mm Our  proof of the concentration result is based  on two  ideas. 
The first is an economical way to apply the union bound. The second is a refined version of Bernstein's concentration inequality for the sum of independent variables.  
Both have nothing to do with random matrices and are  applicable in  general settings.

\end{abstract}
\maketitle

\newpage


\section{Introduction}  \label{learning} 
Let   $A$ be an $n \times n$ invertible matrix  and  $X$ be an $n \times p$ matrix; set   $Y :=AX$. The aim of this paper is to study the following recovery problem: 

\centerline {\it  Given $Y$, reconstruct  $A$ and $X$. }

 \noindent  It is clear that in the equation 

\begin{equation} \label{base} Y = AX, \end{equation}   we have $n^2 + np$ unknowns (the entries of $A$ and $X$), and only $np $ equations (given by the entries of $Y$). 
Thus, the problem is underdetermined and one cannot hope for a
unique solution.
However, in practice, $X$ is frequently a sparse matrix. 
 If $X$ is sparse, the number of unknowns decreases dramatically, as the majority of entries of $X$ are zero.  The name of the game here is to find the minimum value of  $p$, the number of observations, which guarantees 
 a unique recovery (e.g. \cite{aharon2006uniqueness} and \cite{georgiev2004blind}).

 One  real-life application that motivates the studies of this problem  is dictionary learning.
 The matrix $A$ can be seen as a hidden dictionary, with its columns being the words. $X$ is a sparse sample matrix. This means that in the columns of $Y$ we observe linear combinations of a few columns of $A$. 
 From these observations, we would like to recover the dictionary. 
   An archetypal example is facial recognition \cite{zhang2010discriminative} \cite{li2013discriminative}. A database of observed 
   faces is used to generate the dictionary and once the dictionary is found, the problem of storing and transmitting 
  facial images can be done very efficiently, as all one needs is to store and transmit  few coefficients.  
  In fact, such dictionary-learning techniques can be utilized to recognize
  faces that are partially occluded 
  or corrupted with noise \cite{wright2009robust}.  
 For more discussion and real-life examples,  we refer to  \cite{kreutz2003dictionary}, \cite{olshausen1996emergence} and the references therein. 
 Another practical situation in which the recovery problem  appears essential is  blind source separation and we refer the reader to \cite{zibulevsky2000blind} for more details.

There have been
 many approaches to efficient recovery
 beginning with the work of \cite{olshausen1996emergence}.  
 Let us mention, among others,   online dictionary learning by
 \cite{mairal2009online}, 
 SIV \cite{gottlieb2010matrix}, the
 relative Newton method for source separation
 by \cite{zibulevsky2003blind}, 
 the Method of Optimal Directions by 
 \cite{engan1999method}, K-SVD in \cite{aharon2006img}, and scalable variants
 in \cite{mairal2009online}.  
   
 While  various  different approaches have been considered, 
 there have not been many rigorous results concerning performance.  
 The first such result has been obtained by Spielman, Wang and Wright  \cite{SWW} concerning recovery with random samples; in other words, $X$ is a random sparse matrix. 
  Before stating their  result, we 
need to discuss the meaning of \textit{ unique} and the random model.   First, notice that if $Y =AX$, then $Y = (AV) (V^{-1} X) $ for any diagonal matrix $V$ with non-zero diagonal entries. 
Furthermore, one can freely permute the columns of $A$ and the rows of $X$ accordingly while keeping  $Y$ the same.  In the rest of the paper, unique recovery will be understood modulo these two operations.

To model $X$, one considers  random Bernoulli-subgaussian matrices, defined as follows: $X$ is a matrix of size $n \times p$ with iid entries $x_{ij}$, where 

\begin{equation}  \label{defX} x_{ij} := \chi_{ij} \xi_{ij} , \end{equation}  where  $\chi_{ij}$ are iid indicator random variables with $\Prob( \chi_{ij} ) =\theta$ and $\xi_{ij}$ are iid random variables with mean 0, variance bounded by 1, 
$$ \E |\xi| \in [1/10,1],$$  and $$\Prob(|\xi| \ge t)  \le 2 \exp (- t^{2} /2) .$$  
 This model includes many important distributions such as the standard Gaussians and Rademachers.
The $1/10$ is introduced for convenience of analysis and not critical to the argument.

Spielman et. al. proved 

\begin{theorem}  \label{theorem:upper} There are constants  $C>0, C'>0$ such that the following holds. 
Let $A$ be an invertible $n \times n$ matrix and $X$ a sparse random $n \times p$  matrix with  $2/n \le \theta \le C'/\sqrt n $
and $\xi_{ij}$ having a symmetric distribution.  Then for  $p \ge C n ^2 \log^2 n$, one can efficiently  find  a solution with probability $1-o(1)$.  
\end{theorem}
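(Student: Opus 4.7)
Because $A$ is invertible, the row space of $Y=AX$ coincides with that of $X$, and each row of $X$ sits inside this $n$-dimensional subspace as a vector of expected support size $\theta p$. The plan is to recover these rows one at a time by the following family of linear programs: for each column index $r\in\{1,\dots,p\}$, solve
\begin{equation}\label{eq:LPw}
\min_{w\in\R^n}\ \|w^TY\|_1\quad\text{subject to}\quad(w^TY)_r=1.
\end{equation}
If an optimizer satisfies $w^TY=\alpha\, e_i^TX$ for some row index $i$ and scalar $\alpha$, then $w^TY$ is a scaled row of $X$. Looping over $r$, collecting the outputs, and discarding duplicates should recover every row of $X$ (modulo permutation and nonzero diagonal rescaling), after which $A$ is obtained by inversion.

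\textbf{Reduction to a statement about $X$ alone.} The substitution $z:=A^Tw$ turns \eqref{eq:LPw} into
\begin{equation}\label{eq:LPz}
\min_{z\in\R^n}\ \|z^TX\|_1\quad\text{subject to}\quad z^Tx_r=1,
\end{equation}
where $x_r$ is the $r$th column of $X$. For each nonzero entry $X_{i_0,r}$, the canonical feasible point is $z^\star:=e_{i_0}/X_{i_0,r}$, with objective value $\|e_{i_0}^TX\|_1/|X_{i_0,r}|$. The core task is to prove that with probability $1-o(1)$, $z^\star$ is the unique minimizer of \eqref{eq:LPz} for a sufficiently rich collection of pairs $(i_0,r)$ --- rich enough so that, by a coupon-collector argument, every row of $X$ is hit at least once as $r$ sweeps $\{1,\dots,p\}$.

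\textbf{Comparison against competitors.} Writing any competitor as $z^\star+\Delta$ with $\Delta^Tx_r=0$, splitting the column indices by whether $X_{i_0,j}$ vanishes, and using the triangle inequality reduces strict optimality of $z^\star$ to the ``balanced-$\ell_1$'' inequality
\begin{equation}\label{eq:keyineq}
\sum_{j:\,X_{i_0,j}=0}|(\Delta^TX)_j|\;>\;\sum_{j:\,X_{i_0,j}\ne 0}|(\Delta^TX)_j|.
\end{equation}
Since the index set $\{j:X_{i_0,j}=0\}$ has size $\approx(1-\theta)p$ while its complement has size $\approx\theta p$, \eqref{eq:keyineq} is plausible in expectation by a margin of $(1-\theta)/\theta\gtrsim\sqrt n$ throughout our range of $\theta$.

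\textbf{Main obstacle: uniform $\ell_1$ concentration.} The hard part is promoting \eqref{eq:keyineq} from a fixed $\Delta$ to \emph{all} $\Delta$ in the $(n-1)$-dimensional hyperplane $\{\Delta:\Delta^Tx_r=0\}$. For any fixed $\Delta$, $\|\Delta^TX\|_1$ is a sum of $p$ independent contributions and concentrates near its mean via a Bernstein-type bound. Lifting this to uniform control through an $\epsilon$-net on the unit sphere incurs a union-bound penalty of $\exp(Cn)$, so the individual deviation probabilities must be driven down to the $e^{-Cn}$ level. A vanilla Bernstein argument --- given the mixed sub-Gaussian/sub-exponential tails of the coordinates of $\Delta^TX$ --- then forces $p\gtrsim n^2\operatorname{polylog} n$, explaining exactly the threshold $p\ge Cn^2\log^2 n$ claimed. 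Any improvement toward $p\gtrsim n\log^4 n$ has to come from a sharper concentration inequality (one that distinguishes the sub-Gaussian scale from the sub-exponential scale at each sparsity level) combined with a more economical covering; per the abstract, this is precisely where the present paper intervenes.
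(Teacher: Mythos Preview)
Your outline has the right architecture --- reduce to a linear program over the row space, compare the $1$-sparse candidate to competitors, and push the comparison through via uniform $\ell_1$ concentration with an $\epsilon$-net plus Bernstein, which is exactly where the $p\gtrsim n^2\log^2 n$ threshold appears. On that last point you match the paper's sketch in Section~\ref{outline} essentially verbatim.

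There is, however, a genuine gap in your optimality argument. Your inequality \eqref{eq:keyineq} is a correct sufficient condition, but your margin estimate $(1-\theta)/\theta$ tacitly assumes that $\E\bigl[\,|\Delta^TX_j|\;\big|\;X_{i_0,j}=0\,\bigr]$ and $\E\bigl[\,|\Delta^TX_j|\;\big|\;X_{i_0,j}\ne 0\,\bigr]$ are comparable. That is only true when $\Delta_{i_0}=0$. Whenever $x_r$ has at least two nonzero coordinates (which is the typical case once $\theta\gg 1/n$), the constraint $\Delta^Tx_r=0$ allows $\Delta_{i_0}$ to be large; take for instance $\Delta=e_{i_1}/X_{i_1,r}-e_{i_0}/X_{i_0,r}$ for another nonzero index $i_1$ of $x_r$. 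For this $\Delta$, both sides of \eqref{eq:keyineq} are of order $\theta p$ and the ratio is governed by $|X_{i_0,r}|/|X_{i_1,r}|$, not by $(1-\theta)/\theta$. So \eqref{eq:keyineq} cannot hold uniformly with the margin you claim, and the one-step argument collapses.

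The paper (following \cite{SWW}) repairs this with a two-step structure you have skipped. First, Lemma~\ref{SpLem} shows that any minimizer $z_*$ satisfies $\mathrm{supp}(z_*)\subseteq\mathrm{supp}(b)$; the relevant perturbation there is $z_1=z_*-P_Jz_*$, which by construction has $\Delta_{i_0}=0$ for every $i_0\in\mathrm{supp}(b)$, and \emph{then} your margin heuristic is valid and the uniform concentration (Theorem~\ref{theorem:main} or its weaker predecessor Lemma~\ref{old}) closes the argument. Second, a separate ``gap'' argument --- comparing only the finitely many $1$-sparse candidates $e_i/b_i$ for $i\in\mathrm{supp}(b)$ --- shows that among vectors supported on $\mathrm{supp}(b)$ the minimizer is $1$-sparse. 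Your sketch conflates these two steps into one, and it is precisely the competitors inside $\mathrm{supp}(b)$ that break the conflated version. (The paper also uses the two-column constraint $r=Ye_{j_1}+Ye_{j_2}$ rather than your single-column $(w^TY)_r=1$; this is a smaller discrepancy, but worth noting.)
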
 

Here and later, efficient means polynomial time. The algorithm designed for this purpose is called ER-SpUD, whose main subroutine is  $l_1$ optimization. We are going to present and discuss this algorithm in Section \ref{algorithms}. 
In the dictionary learning problem, $p$ is the number of measurements, and it is important to optimize its value. From below, it is easy to see that we must have 
$p \ge c n \log n$ for some constant $c >0$. Indeed, if $\theta = 2/n$ (or $c'/n$ for any constant $c'$) and $p < c n \log n$ for a sufficiently small  constant $c$, then the coupon collector 
argument shows that with probability $1-o(1)$, $X$ has an all-zero row. In this case, changing the corresponding column of $A$ will not effect $Y$, and an unique recovery is hopeless. 
Spielman et. al. conjecture

\begin{conjecture} \label{optimal} 
There are constants  $C>0,\alpha>0$ such that the following holds. 
Let $A$ be an invertible $n \times n$ matrix and $X$ a sparse random $n \times p$  matrix with  $2/n \le \theta \le \alpha/\sqrt n $. Then for  $p \ge C n \log n$, one can efficiently  find  a solution with probability $1-o(1)$.  
\end{conjecture}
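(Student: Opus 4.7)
The plan is to follow the ER-SpUD framework of Spielman-Wang-Wright, which reduces the recovery problem to an $l_1$ minimization question. Since $Y = AX$, the row span of $Y$ equals the row span of $X$, and one attempts to recover a row of $X$ by minimizing $\|w^T Y\|_1$ over $w$ in a suitable affine slice. Setting $z = A^T w$, so that $w^T Y = z^T X$, the algorithm succeeds provided the only minimizers are, up to scaling, the standard basis vectors---that is, the rows of $X$ themselves. This reduces correctness to an $l_1$ concentration statement: with high probability, $\|z^T X\|_1$ strictly exceeds the relevant row $l_1$ norm for every unit vector $z$ whose support has size at least $2$.

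In expectation this is already true: for a unit $z$ with support of size $s$, a short computation against the Bernoulli--subgaussian model gives $\E\|z^T X\|_1 \gtrsim \sqrt{s}\,\theta p$, whereas $\E\|e_i^T X\|_1 \asymp \theta p$, so there is a factor $\sqrt{s}$ gap for $s \ge 2$. The content of the theorem is to promote this expectation gap to a uniform estimate. For each fixed $z$, $\|z^T X\|_1 = \sum_{j=1}^p |\langle z, X_{\cdot j}\rangle|$ is a sum of $p$ independent bounded random variables, so a Bernstein-type tail bound controls its deviation from the mean. A vanilla Bernstein inequality is wasteful because, for small $s\theta$, most summands are exactly zero; a refinement that correctly accounts for this degenerate mass should recover roughly a factor of $n$ over the Spielman-Wang-Wright analysis. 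This is the first of the two ingredients flagged in the abstract.

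The second is a union bound over a discretization of the relevant set of $z$. A blind $\epsilon$-net on the unit sphere has cardinality $e^{\Theta(n)}$, and pairing it with the Bernstein tail forces $p \gtrsim n^2$. Instead, I would stratify the net by the support size $s$ of $z$, and within each stratum by the position of the dominant coordinate, applying the sharp tail in each stratum with net resolution tuned to the local concentration. The main obstacle is composing these two economies cleanly: the strata with weakest concentration (very sparse $z$, which are close to genuine rows of $X$) are precisely those where an accidental minimizer is hardest to rule out, while the strata with strongest concentration (dense $z$) contribute nearly all of the net cardinality. If this balance can be struck optimally, the resulting bound is $p \ge C n\log n$, matching Conjecture \ref{optimal} exactly; more realistically, the same strategy should still deliver $p \ge C n\log^c n$ for a small constant $c$, matching the information-theoretic lower bound up to polylogarithms.
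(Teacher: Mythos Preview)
The statement you are attempting is Conjecture~\ref{optimal}, which the paper does \emph{not} prove. The paper's main result, Theorem~\ref{theorem:main1}, achieves only $p \ge Cn\log^{4} n$; the full conjecture with $p \ge Cn\log n$ remains open except in the sparsest regime $\theta = c/n$, which is handled by a completely separate, elementary algorithm in Section~\ref{verysparse} that has nothing to do with ER-SpUD or concentration.

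Your high-level plan---reduce via ER-SpUD to a uniform $l_1$ concentration statement, then attack that statement with a sharper Bernstein inequality plus an economized union bound---is exactly the program the paper carries out to get $\log^{4} n$. But the specific technical choices differ, and the differences matter. You propose to stratify the net by the support size $s$ of $z$ and to sharpen Bernstein by exploiting that most summands $|\langle z, X_{\cdot j}\rangle|$ vanish. The paper instead builds a \emph{nested} sequence of $l_\infty$-nets at geometric scales $\alpha_l = 2^{l+1}/n$ (Section~\ref{sequence}); an $\alpha$-net of the $l_1$ unit ball in the $l_\infty$ norm has size only $\exp(2\alpha^{-1}\log n)$ (Lemma~\ref{smallnet}), and the $l_\infty$ distance is what directly controls the variance of the \emph{increment} $|X_i v| - |X_i w|$. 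The refined Bernstein (Lemma~\ref{hardlemma}) is then applied not to $\|z^T X\|_1$ itself but to these chaining increments, by slicing each summand into magnitude bands $\tau_k = 2^{-k}$. Both of these moves---$l_\infty$ rather than $l_1$ nets, and refining Bernstein on the increments---are essential and are absent from your sketch.

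The genuine gap is that your stratification does not produce nets small enough to close the argument. A generic $z$ on the $l_1$ sphere has full support, so ``support size'' is not a well-defined stratification parameter without further work; and even on the stratum of exactly $s$-sparse vectors, an $\epsilon$-net has size $\binom{n}{s}(C/\epsilon)^{s}$, which at the resolution needed to resolve the expectation gap still forces $p$ of order $n^2$ by the same calculation as in Spielman--Wang--Wright. You concede this yourself (``If this balance can be struck optimally\dots more realistically\dots''): the proposal is a plan with the hard step left unspecified, not a proof. The paper's $l_\infty$-chaining is precisely the missing mechanism, and even with it one only reaches $n\log^{4} n$; nothing in your outline suggests how to do better.
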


As a matter of fact, they believe that ER-SpUD should perform well as long as $p \ge C n \log n $, for some large constant $C$. 
They also proved that if one does not cared about the running time of the algorithm, then $p \ge C n \log n$ suffices. 

The analysis in \cite{SWW} boils down to the concentration problem. 
For a vector $v \in \R^n$, let $\mu_v := \E \| X^T v\|_1$.
Let $c$ be a small positive constant  ($c=.1$ suffices) and let $Bad (v)$ be the event that $| \|X^T v\|_1 -\mu _v \| \ge c \mu_v $. We want to  have 

\begin{equation} \label{badevent} \P ( \cup_{v \in \R^n} Bad (v) ) = o(1).\end{equation}   In  other words, with high probability,  $\| X^T v \|_1$ does not deviate significantly from its mean,
simultaneously for all $v \in \R^n$. 

One needs   to find the smallest value of $p$ which guarantees \eqref{badevent}.  Notice that $\|X^T v \|$ is the sum of $p$ iid random variables $|X_i v|$ where $X_i$ are the rows of $X$.
Thus, intuitively   the larger $p$ is,  the more $\| X^T v \|$ concentrates.  From below, we observe that \eqref{badevent} fails if $p \le n-1$, since in this case for any 
matrix $X$ one can find a $v$ such that  $X^Tv =0$ and $\mu_v \ge 1$ (we can take $v$ arbitrarily long).  Spielman, Wang, and Wright  \cite{SWW} 
showed that $p \ge Cn^2 \log^2 n$ suffices.  We will prove

\begin{theorem} \label{theorem:main} 
For any constant $c >0$ there is a constant $C>0$ such that \eqref{badevent} holds for any $p \ge C n \log^{4} n $. 
\end{theorem}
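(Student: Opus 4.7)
The plan is to reduce the uniform statement \eqref{badevent} to a discretized one, apply a sharp concentration bound to $\|X^T v\|_1$ for each $v$ in the discretization, and extend back to all of $\R^n$ via a Lipschitz argument. Since both $\|X^T v\|_1$ and $\mu_v$ are absolutely homogeneous in $v$, the event $\mathrm{Bad}(v)$ is scale-invariant, so it suffices to union bound over $v\in S^{n-1}$.

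For a fixed such $v$, write $\|X^T v\|_1 = \sum_{i=1}^p |Z_i|$ with $Z_i := \langle X_i, v\rangle$ i.i.d., $\Var(Z_i)=\theta\|v\|_2^2=\theta$, and $|Z_i|$ subgaussian inherited from the tails of $\xi_{ij}$. Classical Bernstein applied to $|Z_i|-\E|Z_i|$ yields a tail of the form $\exp(-c\min(\mu_v^2/(p\theta),\,\mu_v/M))$, where $M$ is a deterministic upper bound on $|Z_i|$. Because $\E|Z_1|\gtrsim \theta\|v\|_1$ can be as small as $\theta$ for nearly $1$-sparse $v$, this is what forces $p\gtrsim n^2\log^2 n$ in \cite{SWW}. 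I would first truncate $\xi_{ij}$ at $M=O(\sqrt{\log(np)})$ — the discarded mass is negligible by the subgaussian assumption — and then apply a refined Bernstein-type bound that uses the subgaussian tail rather than the crude deterministic truncation, delivering a per-vector tail roughly $\exp(-c\, p\theta\|v\|_1^2/\mathrm{polylog}(n))$.

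The other ingredient is an ``economical'' union bound. A single $\eps$-net of $S^{n-1}$ has cardinality $(C/\eps)^n$, which is matched by the above tail only when $v$ is ``dense''; it is far too coarse for sparse $v$ whose $\|v\|_1$ is small. I would instead stratify by the essential support size $k$ of $v$ — the smallest $k$ such that the top $k$ coordinates carry $1-1/\mathrm{polylog}(n)$ of the $\ell_2$ mass — and build a net for the $k$th stratum of cardinality at most $\binom{n}{k}(C\log n/\eps)^k$, whose logarithm $k\log(n/k)+O(k\log\log n)$ is balanced against the refined Bernstein rate exactly when $p\ge Cn\log^4 n$: for small $k$ the truncation term of the Bernstein tail does the work, for large $k$ the variance term does. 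Extending from the stratified net to the whole sphere uses the deterministic bound $\bigl|\|X^T v\|_1-\|X^T v'\|_1\bigr|\le \|X^T(v-v')\|_1 \le \sqrt p\,\|X\|_{\mathrm{op}}\|v-v'\|_2$ together with the standard high-probability estimate $\|X\|_{\mathrm{op}}=O(\sqrt{p\theta}+\sqrt n)$; choosing $\eps$ inverse-polynomial in $n$ makes the approximation error negligible compared to $c\mu_v$.

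The main obstacle, where the bulk of the technical work will lie, is arranging the three parameters $M$, $\eps$, $k$ so that the refined Bernstein tail beats the log-cardinality of the corresponding net for \emph{every} $k\in[1,n]$ simultaneously, losing only a factor $\log^3 n$ over the information-theoretic lower bound $p\gtrsim n\log n$. In particular, proving that the refined Bernstein loses only polylog factors over the ideal subgaussian rate — rather than the linear factor of $M$ that one gets from the crude truncation — is the key quantitative improvement that delivers Theorem~\ref{theorem:main}.
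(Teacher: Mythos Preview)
Your two high-level ingredients---a Bernstein-type bound that avoids paying for the crude uniform truncation, and an ``economical'' replacement for the naive union bound---are exactly the ones the paper uses. But your implementation of both differs substantially from the paper's, and as written the union-bound piece has a gap.

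The paper does \emph{not} stratify by sparsity and does not bound the absolute deviation $|\,\|X^Tv\|_1-\mu_v\,|$ at each net point. Instead it works on the $\ell_1$ ball and runs a chaining argument through a nested sequence $\CN_L\subset\cdots\subset\CN_0$ of $\ell_\infty$-nets (Lemma~\ref{smallnet}: an $\alpha$-net of the $\ell_1$ ball in $\ell_\infty$ has only $\exp(2\alpha^{-1}\log n)$ points). The quantity estimated at each scale is the \emph{increment} $\P\bigl(|\,\|X^Tv\|_1-\|X^Tw\|_1\,|\ge T\bigr)$ for $\|v-w\|_\infty\le\alpha_l$ (Lemma~\ref{hardlemma}). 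This is where the $\ell_\infty$ metric is decisive: writing $Z_i=|X_iv|-|X_iw|$, one has $|Z_i|\le|X_i(v-w)|$, and $\|v-w\|_\infty\le\alpha_l$ controls \emph{both} the variance $\Var Z_i\le 2\alpha_l\theta$ and the tail of $|X_i(v-w)|$ that feeds into the level-set decomposition of the refined Bernstein. The exponent one obtains, $\exp(-5\alpha_l^{-1}\log n)$, is then matched against the net size $\exp(2\alpha_l^{-1}\log n)$ at that scale, and the telescoping Lemma~\ref{newnet} sums the scales.

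Your one-shot stratification does not deliver this matching. The net you describe for stratum $k$ consists of \emph{exactly} $k$-sparse vectors; a $v$ whose top $k$ coordinates carry $1-1/\mathrm{polylog}(n)$ of the $\ell_2$ mass is only within $1/\sqrt{\mathrm{polylog}(n)}$ of that net in $\ell_2$. With your stated bound $\|X\|_{\mathrm{op}}=O(\sqrt{p\theta}+\sqrt n)$, the Lipschitz error $\sqrt p\,\|X\|_{\mathrm{op}}\|v-v'\|_2$ is of order $n\log^2 n/\sqrt{\mathrm{polylog}(n)}$ when $\theta\sim 1/n$ and $p\sim n\log^4 n$, which swamps $\mu_v\asymp\sqrt n\log^4 n$ for dense $v$. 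Tightening the polylog threshold in the definition of essential support collapses the stratification; loosening it forces you to control $\|X^Tw\|_1$ uniformly over the ``tail'' vectors $w$, which is another instance of the same problem---this circularity is precisely why a multiscale chaining, not a one-shot stratified net, is needed. Separately, your claimed per-vector tail $\exp(-c\,p\theta\|v\|_1^2/\mathrm{polylog})$ is not what Bernstein gives: the variance term is $\mu_v^2/(p\theta)=p(\E|Z_1|)^2/\theta$, and $\E|Z_1|$ is not in general comparable to $\theta\|v\|_1$ across the whole range of $\theta$ and of sparsity levels. The paper sidesteps this entirely by never estimating the deviation of $\|X^Tv\|_1$ directly beyond the coarsest scale.
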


Beyond the current application, 
Theorem \ref{theorem:main} may be of independent interest for several  reasons. While concentration inequalities for random matrices are abundant, 
most of them concern the spectral  or $l_2$ norm. We have not seen one which addresses the $l_1$ norm as in this theorem.
As sparsity  plays crucial role in data analysis, techniques involving $l_1$ norm (such as  $l_1$ optimization) become more and more important. Furthermore,  in the proof we introduce two
general  ideas, which seem to be applicable in many settings. The first is an economical way to apply the union bound and the  second is a refined version of Bernstein's concentration inequality for sums of independent variables.  

Using Theorem \ref{theorem:main}, we  are able to give an improved  analysis of ER-SpUD,  which yields 

\begin{theorem}  \label{theorem:main1} There are constants  $C>0, C'>0$ such that the following holds. 
Let $A$ be an invertible $n \times n$ matrix and $X$ a sparse random $n \times p$  matrix with  $2/n \le \theta \le C'/\sqrt n $.
  Then for  $p \ge C n  \log^{4} n$, one can efficiently  find  a solution with probability $1-o(1)$.  
\end{theorem}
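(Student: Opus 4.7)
The plan is to follow the analysis of ER-SpUD due to Spielman, Wang and Wright \cite{SWW} and to substitute into their argument the stronger concentration estimate of Theorem \ref{theorem:main} in place of the quadratic-in-$n$ bound they prove themselves. As recalled in Section \ref{algorithms}, ER-SpUD reduces the recovery of $A$ to solving, for appropriately chosen constraint vectors $b \in \R^p$, $l_1$ programs of the form
\begin{equation*}
    \min_w \|X^T w\|_1 \quad \text{subject to } \langle b, w\rangle = 1.
\end{equation*}
The analysis in \cite{SWW} shows that, once $X$ satisfies certain deterministic combinatorial, spectral, and concentration properties, the minimizers of these programs are exactly the (scaled) rows of $A^{-1}$, from which $A$ is recovered.

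Our first task will be to catalog the deterministic hypotheses that \cite{SWW} impose on $X$. These split into two groups. The first consists of combinatorial and spectral conditions: every row of $X$ should have roughly $\theta p$ nonzero entries, certain $n \times n$ submatrices built from columns of $X$ should be well conditioned, column supports should have controlled pairwise intersection, and so on. We will verify each of these with probability $1-o(1)$ as soon as $p$ is at least a large constant times $n$, using standard Chernoff and Bernstein bounds together with rectangular random-matrix estimates; none of these conditions costs more than a constant factor of $n$ in sample complexity. The second, and quantitatively binding, hypothesis is precisely the uniform $l_1$ concentration \eqref{badevent}: $\|X^T v\|_1$ must lie within a $(1 \pm c)$ factor of $\mu_v$ simultaneously for all $v \in \R^n$. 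Theorem \ref{theorem:main} supplies this for any prescribed small $c > 0$ at the sample count $p \geq C n \log^4 n$, which matches SWW's remaining requirements, and Theorem \ref{theorem:main1} will then follow by slotting Theorem \ref{theorem:main} into the concentration step of their proof.

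Two small technical points will need attention. First, Theorem \ref{theorem:upper} assumes the noise variables $\xi_{ij}$ to be symmetric, whereas Theorem \ref{theorem:main1} drops this assumption; symmetry is used in \cite{SWW} only inside their own proof of the concentration inequality we are replacing, so since Theorem \ref{theorem:main} does not require it, neither will our analysis. Second, we must check that the relative error constant $c$ in \eqref{badevent} is small enough for the $l_1$-certification argument in \cite{SWW} to carry through; inspection of that argument shows any fixed $c$ below an absolute constant suffices, and Theorem \ref{theorem:main} delivers concentration for any such $c$. The hard work of the paper is thus the proof of Theorem \ref{theorem:main} itself; the deduction of Theorem \ref{theorem:main1} from it should amount to careful bookkeeping within the SWW framework, with no quantitative obstruction beyond the one already resolved by Theorem \ref{theorem:main}.
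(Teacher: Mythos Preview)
Your proposal is correct and matches the paper's own approach: the paper likewise plugs Theorem \ref{theorem:main} into the SWW analysis, isolating as the key step a support-containment lemma (Lemma \ref{SpLem}) whose proof is exactly the $l_1$-certification argument you describe, after which ``the rest of the proof is relatively simple and follows \cite{SWW} exactly.'' One minor quibble: some of the combinatorial conditions on $X$ (e.g.\ row supports of size $\approx \theta p$) actually require $p \ge C n \log n$ rather than $p \ge Cn$, but this is harmless since the concentration step already forces $p \ge C n \log^{4} n$.
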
 

Our $p$ is  within a $\log^{3} n$ factor from the  bound in Conjecture \ref{optimal} . Furthermore,   we can drop the assumption that  $\xi_{ij}$ are symmetric from 
Theorem \ref{theorem:upper}.

Next, we will be able to refine Theorem \ref{theorem:main} in two ways. 
First,
combining the proof of Theorem \ref{theorem:main1} with a result from random matrix theory, we obtain the following more general result, which handles the case when $A$ is rectangular

\begin{theorem}  \label{theorem:main2} 
There are constants $C, \alpha >0$ such that the following holds. 
Let $n > m$ and $A$ be an $n \times m$ matrix of rank $m$ and  and $X$ a sparse random $m  \times p$  matrix with  $2/n \le \theta \le \alpha/\sqrt n $. Then for  $p \ge C n \log^{4} n$, one can efficiently find a 
solution with probability $1-o(1)$ \end{theorem}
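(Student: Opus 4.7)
The plan is to reduce the rectangular case to the square case already covered by Theorem \ref{theorem:main1}. The crucial observation is that, since $A$ has rank $m$, the column space of $Y=AX$ coincides with the column space of $A$ provided $X$ has full row rank $m$. This latter property should hold with high probability for a sparse Bernoulli--subgaussian matrix in the specified regime; this is the ``result from random matrix theory'' referred to in the statement. Once the column space of $A$ is extracted from $Y$, projecting onto it produces an equivalent $m\times m$ dictionary-learning problem to which Theorem \ref{theorem:main1} directly applies.

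Concretely, I would proceed as follows. First, compute an orthonormal basis $Q\in\R^{n\times m}$ for the column space of $Y$ (for instance via an SVD; this is efficient). With high probability $\mathrm{col}(Y)=\mathrm{col}(A)$, so we may write $A=QB$ where $B:=Q^TA$ is an $m\times m$ invertible matrix. Multiplying $Y=AX$ on the left by $Q^T$ gives
\[
\tilde Y := Q^T Y = B X,
\]
an $m\times p$ equation in which $B$ is square and invertible and $X$ is unchanged. Feed $\tilde Y$ into ER-SpUD to recover $B$ and $X$ (modulo permutation and diagonal rescaling), then set $\widehat A := QB$. The resulting ambiguity in $\widehat A$ is exactly the diagonal/permutation ambiguity, matching the notion of unique recovery used throughout the paper.

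For this reduction to be legitimate we must verify that the hypotheses of Theorem \ref{theorem:main1} are inherited by the reduced problem. The dictionary $B$ is $m\times m$ and invertible by construction. The density satisfies $\theta\le\alpha/\sqrt n \le \alpha/\sqrt m$, so choosing $\alpha = C'$ secures the upper bound. The sample count is $p\ge C n\log^4 n\ge C m\log^4 m$, so the $p$-requirement is met. The only subtle point is the lower bound $\theta\ge 2/m$ in Theorem \ref{theorem:main1}: we only have the weaker $\theta\ge 2/n$. Inspection of the role played by this lower bound shows that all it is used for is the estimate $\theta p\gtrsim \log^{O(1)} n$, which via a coupon-collector/Chernoff argument guarantees that every row of $X$ contains many nonzeros. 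In our setting we still have $\theta p\ge (2/n)(Cn\log^4 n)=2C\log^4 n\ge 2C\log^4 m$, so this estimate remains valid and the analysis of Theorem \ref{theorem:main1} carries through.

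The main obstacle is the random-matrix ingredient: showing that the sparse Bernoulli--subgaussian matrix $X\in\R^{m\times p}$ has rank $m$ with probability $1-o(1)$ in the regime $\theta\ge 2/n$, $p\ge Cn\log^4 n$. Because $\theta p$ is (poly)logarithmically large, each row of $X$ contains many nonzero entries with high probability, and one can then invoke standard small-ball/Littlewood--Offord-type arguments (in the spirit of Rudelson--Vershynin, or the dedicated bounds for sparse random matrices of Basak--Rudelson and Litvak--Tikhomirov) to show that no nonzero vector in $\R^m$ is orthogonal to all columns of $X$. Alternatively, one can work combinatorially with the nonzero pattern of $X$, showing that the associated bipartite graph satisfies Hall's condition with high probability, which via a perturbation/genericity argument forces $X$ to have full row rank. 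Either route supplies the needed full-rank statement and completes the reduction.
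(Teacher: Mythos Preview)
Your reduction---project $Y$ onto its column space and solve an $m\times m$ problem---is not what the paper does. The paper instead \emph{augments} to an $n\times n$ problem: one generates a random $n\times(n-m)$ matrix $B$ and an $(n-m)\times p$ matrix $Z$ with entries iid to those of $X$, forms $A'=[A\mid B]$ and $X'=\bigl(\begin{smallmatrix}X\\ Z\end{smallmatrix}\bigr)$, and notes that $Y':=A'X'=Y+BZ$ is computable from $Y$ alone. ER-SpUD is then applied to $Y'$, and the augmented rows/columns are stripped out afterward. The ``result from random matrix theory'' is the Bourgain--Vu--Wood theorem (Theorem~\ref{BVW}), which guarantees that $A'$ is invertible with high probability even when $B$ has discrete entries. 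The virtue of augmenting rather than projecting is that the resulting square problem has dimension $n$, so the density hypothesis $\theta\ge 2/n$ matches Theorem~\ref{theorem:main1} verbatim and that theorem applies as a black box.

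Your projection route has a real gap precisely where you flag it. The reduced problem is $m$-dimensional, so Theorem~\ref{theorem:main1} would require $\theta\ge 2/m$; you only have $\theta\ge 2/n$. Your assertion that the lower bound is used solely to secure $\theta p\gtrsim\log^{O(1)} n$ is not accurate: in Section~\ref{magnitude} the condition $\theta n\ge 1$ is invoked directly (for instance to bound $\frac{c_0 C}{32}\sqrt{\theta n}\,\log n$ from below by a multiple of $\log n$), and more structurally, when $n\gg m$ and $\theta=2/n$ each column of $X\in\R^{m\times p}$ has expected support size $2m/n\ll 1$, so almost every column of $X$---and hence of $\tilde Y$---is identically zero. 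The ER-SpUD column-pairing step then yields mostly infeasible or degenerate subproblems, and the analysis in Section~\ref{algorithms} does not carry over as written. You might be able to rescue the argument by re-running the concentration proof with the extra slack $p\ge Cn\log^4 n\gg Cm\log^4 m$, or by noting that in this extreme regime the nonzero columns of $X$ are essentially $1$-sparse and appealing to the very-sparse analysis of Section~\ref{verysparse}; but neither is the black-box citation you propose. The paper's augmentation trick sidesteps the issue entirely by matching the ambient dimension to the density parameter.
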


Second, in  the sparest case $\theta := \Theta (1/n)$, we develop a new algorithm that  obtains the optimal bound $p = C n \log n$,  proving Conjecture \ref{optimal} in this regime.

\begin{theorem}  \label{theorem:main3}  For any  $c >0$  there is a constant $C >0$ such that the following holds. 
Let $A$ be an invertible $n \times n$ matrix and $X$ a sparse random $n \times p$  matrix with  $\theta = c/n $. Then for  $p \ge C n \log n$, one can efficiently find a 
solution with probability $1-o(1)$ \end{theorem}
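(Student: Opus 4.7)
The plan is to exploit that in the regime $\theta = c/n$, a positive fraction $n\theta(1-\theta)^{n-1} \to c e^{-c}$ of the columns of $X$ have exactly one nonzero entry; call such columns \emph{singletons}. If column $j$ of $X$ is a singleton at row $i$, then $Y_j = x_{ij} A_i$ is a nonzero scalar multiple of the $i$-th column of $A$. Recovering the directions of all singletons therefore recovers $A$ up to the inherent column-scaling and permutation ambiguity, after which $X = A^{-1} Y$ is determined.

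The algorithm I propose is to partition the nonzero columns of $Y$ into equivalence classes under the relation $Y_j \sim Y_k$ iff $Y_j = \lambda Y_k$ for some $\lambda \neq 0$, then output one representative from each class whose size exceeds a threshold $\tau \log n$ (for a constant $\tau = \tau(c) > 0$). A clean structural separation is built in: because $A$ is invertible, its columns are linearly independent, and a direct check shows that two columns of $Y$ coming from columns of $X$ with distinct supports can never be parallel, and in particular a singleton of type $i$ can never be parallel to a multi-support column. The only way multi-support columns of $Y$ can be parallel is if the underlying $X$-columns share the same support and have proportional values.

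The analysis then has two pieces. For \emph{completeness}, a coupon-collector estimate shows that when $p \geq Cn \log n$ with $C$ larger than roughly $e^c/c$, every column of $A$ is represented by at least $2\tau \log n$ singletons among the $p$ samples with probability $1 - o(1)$. For \emph{soundness}, one shows that no multi-support parallelism class has size $\tau \log n$: two $X$-columns match on a given support $S$ of size $k \geq 2$ with probability $(\theta^k (1-\theta)^{n-k})^2$, and, conditionally on matching supports, the probability of proportional nonzero values is at most $O(2^{-k})$ in the Rademacher case and $0$ in the continuous case. Summing over $k \geq 2$ and $S$ gives $O(1/n^2)$ per pair under $\theta = c/n$, so the expected number of parallel multi-support pairs among $\binom{p}{2} = O(n^2 \log^2 n)$ pairs is only $O(\log^2 n)$. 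A Chernoff / higher-moment argument upgrades this to the statement that whp no multi-support column lies in a parallelism cluster of size bigger than $O(1)$, certainly below $\tau \log n$.

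The main obstacle will be the discrete-distribution case, where "accidental" parallelism among same-support columns is a genuine combinatorial event rather than a measure-zero triviality, and one must control the tail of the cluster-size distribution (not just its expectation) uniformly over all $\binom{p}{2}$ pairs. What makes this tractable is precisely the extreme sparsity $\theta = c/n$: the per-pair match probability is $\Theta(1/n^2)$, balancing the $\Theta(n^2 \log^2 n)$ total pairs and leaving only polylogarithmic slack. A secondary but minor technical issue is implementing the parallelism test robustly with inexact arithmetic, which reduces to elementary bounds on $\sigma_{\min}(A)$.
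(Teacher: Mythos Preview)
Your proposal is correct and follows essentially the same strategy as the paper: cluster the columns of $Y$ by parallelism, and output representatives of the large clusters as the columns of $A$. The differences are in the execution rather than the idea.

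The paper uses a constant threshold of $3$ rather than your $\tau\log n$. Its soundness lemma shows directly, by a union bound over \emph{triples} of columns, that with probability $1-o(1)$ no three columns of $X$ share the same support of size $\ge 2$; hence any parallelism class of size $\ge 3$ must consist entirely of singletons. Notice the paper never tracks proportionality of the nonzero values at all --- it just rules out three \emph{aligned} columns (same multi-element support), which is a strictly stronger event than three parallel columns and has a one-line computation. Your route through ``expected number of parallel multi-support pairs is $O(\log^2 n)$, then upgrade by Chernoff to cluster size $O(1)$'' reaches the same conclusion but the upgrade step as stated is not literally a Chernoff bound (an $O(\log^2 n)$ expectation on pairs does not by itself forbid one cluster of size $\sqrt{\log n}$); the clean fix is exactly the paper's move --- union bound directly on $m$-tuples of aligned columns for $m=3$ (or $m=\tau\log n$ if you insist on the larger threshold). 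On the completeness side the two arguments are the same coupon-collector estimate, differing only in whether one needs $3$ or $\Theta(\log n)$ singleton hits per row, both of which follow once $C$ is large enough relative to $e^c/c$.
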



Finally, let us mention the issue of theoretical recovery, regardless the running time. Without the complexity issue, Spielman et. al. showed that 
$p \ge C n \log n$ suffices, given that the random variable $\xi_{ij}$ in the definition of $X$ has a symmetric distribution. We could strengthen this theorem by removing this assumption.

\begin{theorem}  \label{theorem:main4} There are constants  $C>0, C'>0$ such that the following holds. 
Let $A$ be an invertible $n \times n$ matrix and $X$ a sparse random $n \times p$  matrix with  $2/n \le \theta \le C'/\sqrt n $.
  Then for  $p \ge C n  \log n$, one can   find  a solution with probability $1-o(1)$.  
\end{theorem}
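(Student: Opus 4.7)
The plan is to revisit the information-theoretic argument of Spielman, Wang and Wright that achieves the optimal sample count $p \ge Cn\log n$ for symmetric $\xi_{ij}$, isolate the one place where symmetry is actually invoked, and replace it by a distribution-free anti-concentration input. Since $A$ is invertible, the row span of $Y=AX$ equals that of $X$, so (modulo the scaling and permutation ambiguities) recovery reduces to identifying the $n$ rows of $X$ as the unique (up to scaling) sparsest non-zero vectors in the row span of $X$; once the rows of $X$ are known, $A$ is read off from $Y=AX$. Thus I must establish a quantitative gap: with probability $1-o(1)$, for every $w \in \R^n$ with $\|w\|_0 \ge 2$ and unit norm, $\|w^T X\|_0 \ge (1+c)\theta p$, while a standard Chernoff estimate gives $\|X_{i,\cdot}\|_0 = (1+o(1))\theta p$ for every $i$.

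For a fixed $w$ supported on $S$ with $|S|=k\ge 2$, the $j$-th coordinate $(w^T X)_j = \sum_{i\in S} w_i\chi_{ij}\xi_{ij}$ vanishes either because every indicator $\chi_{ij}$ with $i\in S$ is zero, which has probability $(1-\theta)^k \le 1-\theta(1+c')$ for some fixed $c'>0$ throughout the range $\theta \le C'/\sqrt n$ and $k\ge 2$, or because the surviving terms cancel exactly. The cancellation probability is precisely where the Spielman--Wang--Wright argument uses symmetry. To remove that assumption I would apply a Kolmogorov--Rogozin/Esseen-type bound: conditional on a non-empty survivor set $S'\subseteq S$, and using $\E|\xi_i|\ge 1/10$ together with $\Var(\xi_i)\le 1$ to give each $\xi_i$ a L\'evy concentration function at unit scale bounded away from $1$, one obtains $\sup_t \Prob\bigl(\sum_{i\in S'} w_i\xi_i = t\bigr) \le C/\sqrt{|S'|}$. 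Averaging over $|S'|\sim\mathrm{Binomial}(k,\theta)$, the cancellation contribution is $o(\theta)$ as long as $w$ is not too spiky, so that $(w^T X)_j \ne 0$ with probability at least $\theta(1+c/2)$.

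A Bernstein bound then concentrates $\|w^T X\|_0$ around its mean, and a discretization of the unit sphere inside each $k$-dimensional coordinate subspace, combined with a Lipschitz-in-$w$ argument, extends the pointwise estimate to all $w$ with a given support. A union bound over the $\binom{n}{k}$ supports and the $\eps$-net then succeeds as soon as $p\theta \gtrsim k\log n$, which is guaranteed by $p\ge Cn\log n$ and $\theta\ge 2/n$. The main obstacle I foresee is the spiky regime where a single coordinate of $w$ dominates the others: there the Kolmogorov--Rogozin bound degrades and the cancellation term is not automatically negligible compared with $\theta$. I would handle such $w$ separately by writing $w = \alpha e_{i_0} + w'$ with $\|w'\|$ small and noting that the non-zero entries of $(w')^T X$ appear, with positive probability proportional to $\theta$, in columns $j$ where $\chi_{i_0 j}=0$; this strictly increases the support of $w^T X$ beyond that of $X_{i_0,\cdot}$ and yields the required gap by a direct calculation, without ever appealing to the symmetry of $\xi_{ij}$.
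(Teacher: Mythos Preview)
Your high-level strategy is right, and so is your instinct that symmetry enters the Spielman--Wang--Wright argument only through an anti-concentration input. But the execution has a real gap. The step ``a Lipschitz-in-$w$ argument extends the pointwise estimate to all $w$'' does not work for the quantity $\|w^T X\|_0$, which is discontinuous in $w$: a bound on the support size at net points does not transfer to nearby vectors unless you first establish that many coordinates $|(w^TX)_j|$ are bounded \emph{away} from zero by some margin, and then choose $\eps$ small relative to that margin divided by the column norms of $X$. You do not set this up, and the point-mass formulation $\sup_t \Prob(\sum_{i\in S'} w_i\xi_i = t)\le C/\sqrt{|S'|}$ does not supply the margin. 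Separately, the claim ``$p\theta\gtrsim k\log n$ is guaranteed by $p\ge Cn\log n$ and $\theta\ge 2/n$'' is false at the lower edge $\theta=2/n$, where $p\theta=\Theta(\log n)$; since your lower bound $\Prob((w^TX)_j\ne 0)\ge\theta(1+c/2)$ ignores the $k$-dependence of the true mean $\asymp\min(k\theta,1)\,p$, your Bernstein exponent is only $\Theta(\theta p)$ and cannot beat the union bound over $\binom{n}{k}$ supports once $k$ grows.

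The paper sidesteps both problems by never discretizing over $w$ at all. It follows the Spielman--Wang--Wright combinatorial argument verbatim and isolates the single lemma where symmetry is invoked: the statement that if $U$ is an $s\times b$ matrix with at least one nonzero per column and random nonzero entries, then with high probability its left nullspace contains no fully dense vector. SWW prove this by a column-by-column dimension-drop argument whose only probabilistic input is $\Prob(\alpha^T u_j=0)\le 1/2$ for any fixed fully dense $\alpha$, obtained from symmetry. The paper simply substitutes the Rudelson--Vershynin small-ball inequality $\Prob(|\sum a_k\xi_k|<1/2)\le\nu<1$, valid for centered $\xi_k$ with variance bounded below and bounded fourth moment (both implied by the model after a harmless rescaling). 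Everything downstream in SWW then goes through unchanged. No quantitative Kolmogorov--Rogozin bound, no $\eps$-net, and no separate spiky-case analysis are needed.
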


The rest of the paper is organized as follows. In Section \ref{outline}, we present 
the main ideas behind the proof of Theorem \ref{theorem:main}. The details follows next in Section \ref{details}. Section \ref{algorithms} 
contains the accompanying algorithms and an improved 
 analysis of ER-SpUD, following \cite{SWW}.   Section \ref{appendix:rectangular} addresses a generalization to rectangular dictionaries.  
Section \ref{verysparse} introduces a new algorithm that achieves the optimal bound in the sparse regime. In Section \ref{theoreticalbound}, we prove Theorem \ref{theorem:main4}. 
  We conclude with  Section \ref{numerical},  in which we  present some numerical 
experiments of the various algorithms. 

\vskip2mm 

{\it Acknowledgement.} We would like to thank D. Spielman for bringing the  problem to our attention.

\section{The main ideas and lemmas} \label{outline}
\subsection{The standard $\epsilon$-net argument} 

Let us recall our task.    For a vector $v \in \R^n$, let $\mu_v := \E \| X^T v\|_1$.
Let $c$ be a small positive constant  ($c=.1$ suffices) and let $Bad (v)$ be the event that $| \|X^T v\|_1 -\mu _v \| \ge c \mu_v $. We 
want to show that if $p$ is sufficiently large, then 

\begin{equation} \label{badevent1} \P ( \cup_{v \in \R^n} Bad (v) ) = o(1).\end{equation}  

For the sake of presentation, let us assume that the random variables $\xi_{ij}$ are Rademacher (taking values $\pm 1$ with probability $1/2$); the entries $x_{ij}$ of 
$X$ have the form $x_{ij} = \chi_{\ij} \xi_{ij} $, where $\chi_{ij}$ are iid indicator variables with mean $\theta$. 
We start by a quick proof of the bound $p \ge C n^2 \log^2 n$ obtained in \cite{SWW}. Notice that the union in \eqref{badevent} contains infinitely many 
terms. The standard way to handle this is to use an $\epsilon$-net argument.

\begin{definition}  A set $\CN \subset \R^n $ is an $\epsilon$-net of a set $D \subset \R^n $ in  $l_q$ norm, for some $ 0 < q \le \infty$, if for any  $x \in D$ there is $y \in \CN$ so that 
$\| x- y\| _q \le \epsilon$. The unit sphere in $l_q$ norm  consists of vectors $v$ where $\| v\|_{q} = 1$. $B$ denotes the unit sphere in $l_1$ norm. \end{definition}

Considering the vectors in $B$ is sufficient to prove the result.
It is easy to show that for any $v \in B$ 

$$ \mu_{min} := p \sqrt {\theta/ n} \le \mu_v  \le p \theta := \mu_{\max} , $$ where the lower bounds attend at $v=\frac{1}{n} {\bf 1} $  ({\bf 1} is the all one vector) 
and the upper bound at  $v=(1, 0, \dots, 0)$.  Let $\CN_0$ be the set of all vectors in $B$ whose coordinates are integer multiples of $n^{-3}$. 
Any vector in $B$ would be of distance at most $n^{-2}$ in $l_1$ norm from some vector in $\CN_0$ (thus $\CN_0$ is an $n^{-2}$-net of $B$). 
A short consideration shows that if $u, v \in B$ are within $n^{-2}$ of each other, then 

$$|\mu_v - \mu_u |  = o( \mu _{min } ). $$

Thus, to prove \eqref{badevent}, it suffices to show that 

\begin{equation} \label{badevent2} \P ( \cup_{v \in \CN_0} Bad (v) ) = o(1).\end{equation}  

In order to bound   $\P ( \cup_{v \in \CN_0} Bad (v) )$, let us first bound $\P(Bad (v))$ for any $B$. Notice that

$$\| X^T v \|_1 = \sum_{i=1} ^ p  | X_i v |, $$ where $X_i$ are the columns of $X$. The random variables $|X_i v|$ are iid, and one is poised to apply another standard tool, Bernstein's inequality 
for the sum of independent random variables.

\begin{lemma}  \label{Bennett} 
Let $Z_1, \dots, Z_n$ be independent  random variables such that $|Z_i| \le \tau$ with probability 1. 
 Let $S:=\sum_{i=1}^n Z_i$. Then  for any $T >0$

$$\max\{ \P( S- \E S \le -T) , \P ( S - \E S| \ge T) \}   \le  \exp( - \frac{T^2}{2( \Var S  + T \tau ) })  \le \ \exp( - \min \{ \frac{T^2}{ 4\Var S } , \frac{T}{4 \tau }\} ) .$$

\end{lemma}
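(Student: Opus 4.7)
This is the classical Bernstein inequality (with the slightly weaker constant $T\tau$ in place of the more familiar $T\tau/3$); the plan is the standard Chernoff / moment generating function approach, followed by an elementary case split for the second displayed bound. The main steps are: (i) reduce to centered variables, (ii) bound $\E e^{\lambda Z_i}$ using both the variance and the a.s.\ bound $\tau$, (iii) apply Markov's inequality to $e^{\lambda S}$ and optimize over $\lambda$, and (iv) compare the resulting exponent with the minimum form.

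After replacing each $Z_i$ by $Z_i - \E Z_i$, which only inflates the a.s.\ bound by a harmless constant, I may assume $\E Z_i = 0$. The key MGF estimate proceeds by Taylor expansion: since $|Z_i|^k \le \tau^{k-2} Z_i^2$ for $k \ge 2$,
\[
\E e^{\lambda Z_i} \;=\; 1 + \sum_{k\ge 2} \frac{\lambda^k\, \E Z_i^k}{k!} \;\le\; 1 + \lambda^2 \sigma_i^2 \sum_{j\ge 0}\frac{(\lambda\tau)^j}{(j+2)!},
\]
with $\sigma_i^2 = \Var(Z_i)$. Using $(j+2)! \ge 2\cdot j!$ together with $e^x \le 1/(1-x)$ for $x \in [0,1)$ bounds the series by $1/(2(1-\lambda\tau))$ for $0 < \lambda < 1/\tau$, and hence
\[
\E e^{\lambda Z_i} \;\le\; \exp\!\Bigl(\tfrac{\lambda^2 \sigma_i^2}{2(1-\lambda\tau)}\Bigr).
\]
Multiplying over $i$ by independence gives the same bound for $\E e^{\lambda S}$, with $\Var S$ in place of $\sigma_i^2$.

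Markov's inequality then yields $\P(S \ge T) \le \exp\bigl(-\lambda T + \tfrac{\lambda^2\Var S}{2(1-\lambda\tau)}\bigr)$ for $\lambda \in (0, 1/\tau)$. Choosing the explicit minimizer $\lambda = T/(\Var S + T\tau)$---which automatically satisfies $\lambda < 1/\tau$---produces exponent $-T^2/(2(\Var S + T\tau))$, establishing the first claimed inequality. The matching lower-tail bound follows by applying the same argument to $-Z_i$.

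Finally, for the second inequality I would do a trivial case split: if $\Var S \ge T\tau$, then $\Var S + T\tau \le 2\Var S$, giving $T^2/(2(\Var S + T\tau)) \ge T^2/(4\Var S)$; otherwise $\Var S + T\tau \le 2T\tau$, giving $T^2/(2(\Var S + T\tau)) \ge T/(4\tau)$. In either case the exponent dominates $\min\{T^2/(4\Var S),\, T/(4\tau)\}$, as claimed. There is no substantive obstacle---the proof is entirely routine---the only care point being to verify that the optimal $\lambda$ lies in $(0, 1/\tau)$ so that the MGF estimate is valid, which is immediate from the explicit formula.
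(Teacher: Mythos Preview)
The paper does not actually prove this lemma; it is quoted as ``another standard tool, Bernstein's concentration inequality for the sum of independent random variables'' and used as a black box throughout. Your proposal supplies the classical Chernoff--MGF proof, which is correct and entirely standard.

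One small point worth flagging: when you center and say the a.s.\ bound is inflated ``by a harmless constant,'' that constant is $2$ (since $|Z_i-\E Z_i|\le 2\tau$), which literally turns the exponent into $-T^2/(2(\Var S + 2T\tau))$ rather than the stated $-T^2/(2(\Var S + T\tau))$. This is genuinely harmless for every application in the paper---all constants are absorbed into the large $C$---and the lemma as written already carries $T\tau$ instead of the textbook $T\tau/3$, so precise constants are clearly not the point. If you want the exact form as stated, either assume the $Z_i$ are centered from the outset (which is how the lemma is actually invoked in the paper: the $Z_i$ there are differences like $|X_iv|-|X_iw|$ and the bound $\tau$ is applied to these directly) or note that the sharper Bennett-type argument recovers the better constant.
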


In our case $Z_i = |X_i v| = \sum_{i=1}^n \x_{ij} v_j $.  As $|x_{ij} = \chi_{ij} \xi_{ij} | \le 1$ with probability 1 (we assume that $\xi_{ij}$ are Rademacher)

$$|Z_i |   \le \sum_{i=1}^n |v_j|  =\|v \|_1 =1 $$ with probability 1. This means we can set $\tau=1$. Furthermore 

$$\Var \sum_{i=1}^p Z_i = p \Var Z_i  \le p \E |X_i v| ^2 = p \sum_{j=1}^n \theta v_j^2 \le p \theta \sum_{j=1}^n |v_j| = p \theta. $$  Finally, one can set 
$T = c \mu_{min} = c p \sqrt {\theta/n} $.  Lemma \ref{Bennett} implies that 

$$\P(Bad(v) \le  2 \exp(- \min \{ \frac{c^2 p^2 \theta/n }{ 4 p \theta }, \frac{ c p \sqrt {\theta/n} }{4} \} ) = 2 \exp( - \frac{c^2 p }{4n } ) $$ since $\sqrt {\theta/n } \ge 1/n$ as 
$\theta \ge 1/n$.  

Using the union bound 

\begin{equation} \P( \cup_{v \in \CN_0} Bad (v) \le \sum_{v \in \CN_0 } \P(Bad(v))  \end{equation} we obtain 

$$ \P( \cup_{v \in \CN_0} Bad (v) ) \le |\CN_0 |  \times 2  \exp( - \frac{c^2 p }{4n } ) . $$ It is easy to check that $\CN_0 = \exp(\Omega (n \log n) )$. So,
in order to make the RHS $o(1)$, we need $p \ge C n^2 \log n$ for a sufficiently large constant $C$.  For the case when $\xi_{ij}$ are not Bernoulli (but still subgaussian) 
the calculation in \cite{SWW} requires  an extra logarithm term, which results in the bound $p \ge C n^2 \log^2 n $.

\subsection {New ingredients} 

Our first  idea is to find a  more efficient variant of the union bound

$$\P (\cup_{v \in \CN_0}  Bad(v) ) \le \sum_{v \in \CN_0 } \P ( Bad (v) ).$$

Motivated by the inclusion-exclusion formula
we try to capture some gain when 
$\Prob ( Bad (u) \cap  Bad (v)) $ is large for many pairs $u, v$. We  observe that if we can group the elements $v$ of the net  into clusters so that within each cluster, the events 
$Bad (v) $ (seen as subsets  of the underlying probability space) are close to each other. Assume, for a moment, that one can split the net  $\CN_0$ into $m$ disjoint  clusters $\CC_i$, $1 \le i \le m$, so that 
if $u$ and $v$ belong to the same cluster  $\P ( Bad(u) \backslash Bad (v) ) \le  p_1$,   where $p_1$ is much smaller than $p_0$, then 

$$ \P (\cup_{ v \in \CC_i} Bad (v) ) \le \P  (Bad (v^{[i]} ) ) +  | \CC_i| p_1 , $$ where $v^{[i]} $ is a representative point in $\CC_i$. Summing over $i$, one obtains 

\begin{equation} \label{2level} \P  (\cup_{v \in \CN_0 } Bad (v)  ) \le \sum_{i=1} ^m  \Prob( \cup_{v \in \CC_i} Bad (v)) \le \sum_{i=1}^m  \Prob (Bad (v^{[i]} ))  + | \CN_0 | p_1  \le m p_0 + | \CN _0 | p_1. \end{equation} 

We  gain significantly if  $p_1$ is much smaller than $ p_0$ and $m$ is much smaller than $ |\CN_0 | $.  Next, viewing the set of representatives $v^{[i]} $ as a new net $\CN_1$, we can iterate  the argument, obtaining the following lemma. 

\begin{lemma}  \label{newnet} Let $\CP$ be a probability space. 
Let $\CN = \CN_0$ be a finite set, where to each element $v \in \CN_0$ we associate a set  $Bad_0 (v) \subset \CP $. Assume that we can construct a  sequence of sets 

$$\CN_L ,  \CN_{L-1} , \dots ,  \CN_0 , $$ and  for each $u \in \CN_l, 1 \le l \le L$  an event $Bad_l(u)$ such that the following holds. 
For  each $v \in \CN_{l-1}  $, there is $u \in \CN_{l} $ such that 
$\P (Bad_{l-1}  (v) \backslash Bad_{l}  (u) ) \le p_{l} $ and  for  each  $u \in \CN_L$, $\P( Bad_L (u) ) \le p_0$. Then

\begin{equation} \label{Llevel} \P (\cup_{v  \in \CN_0} Bad_0 (v)  ) \le  | \CN_L| p_0 + \sum_{l=1} ^{L} | \CN_{l-1} | p_{l} . \end{equation} 
\end{lemma}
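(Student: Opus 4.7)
The plan is to prove the lemma by a straightforward iterative (telescoping) union bound, following exactly the two-level intuition that motivated the statement.

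First, I would use the hypothesis to define, for each level $l \in \{1,\dots,L\}$, a ``parent map'' $\phi_l : \CN_{l-1} \to \CN_l$ that assigns to each $v \in \CN_{l-1}$ an element $\phi_l(v) = u \in \CN_l$ satisfying $\P(Bad_{l-1}(v) \setminus Bad_l(u)) \le p_l$. The set-theoretic inclusion
\begin{equation*}
Bad_{l-1}(v) \subset Bad_l(\phi_l(v)) \cup \bigl( Bad_{l-1}(v) \setminus Bad_l(\phi_l(v)) \bigr)
\end{equation*}
then holds for every $v \in \CN_{l-1}$.

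Next, I would take the union over $v \in \CN_{l-1}$ on both sides. On the left we get $\bigcup_{v \in \CN_{l-1}} Bad_{l-1}(v)$; on the right, the first piece is contained in $\bigcup_{u \in \CN_l} Bad_l(u)$ (since $\phi_l(v) \in \CN_l$), and the second piece is a union of at most $|\CN_{l-1}|$ sets each of probability at most $p_l$. The ordinary union bound then yields
\begin{equation*}
\P\Bigl(\bigcup_{v \in \CN_{l-1}} Bad_{l-1}(v)\Bigr) \;\le\; \P\Bigl(\bigcup_{u \in \CN_l} Bad_l(u)\Bigr) + |\CN_{l-1}|\, p_l.
\end{equation*}

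Finally, I would iterate this bound for $l = 1, 2, \ldots, L$ (a telescoping step), which accumulates the error term $\sum_{l=1}^L |\CN_{l-1}| p_l$ and leaves the residual probability $\P(\bigcup_{u \in \CN_L} Bad_L(u))$ at the top level. That residual is controlled by a single standard union bound using the hypothesis $\P(Bad_L(u)) \le p_0$ for each $u \in \CN_L$, giving $|\CN_L| p_0$. Combining these yields \eqref{Llevel}. There is no real obstacle here; the only ``choice'' in the argument is the parent map $\phi_l$, whose existence is guaranteed directly by the hypothesis, so the entire proof reduces to carefully bookkeeping set inclusions and additive error terms.
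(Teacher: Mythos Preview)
Your proof is correct and matches the paper's approach: the paper only sketches the two-level case (equation \eqref{2level}) and then states the lemma with the remark ``we can iterate the argument,'' which is exactly the telescoping you carry out. Your introduction of the parent map $\phi_l$ is a clean way to formalize that iteration, and nothing more is needed.
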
 

The construction of $\CN_l$ are of critical importance, and we are going to construct them using the $l_{\infty}$ distance, rather than the obvious choice of 
$l_1$. {\it This is the key point of our method. }

The next main technical ingredient is a more efficient way of using Bernstein's inequality, Lemma \ref{Bennett}. 
Recall the bound

\begin{equation} \label{B} \P (| S - \E S| \ge T ) \le 2 \exp( - \frac{T^2}{2( \Var S  + T \tau ) })  \le 2 \exp( - \min \{ \frac{T^2}{ 4\Var S  }, \frac{T}{4 \tau } \}) . \end{equation}

The first term $\frac{T^2}{4 \Var S }$ on the right most formula is usually optimal. However, we need to  improve the second term. The idea is to replace $\tau$ with a smaller quantity 
$\tau'$ such that the probability that $|Z_i| \le \tau'$ is close to 1. Let  us illustrate this  idea with the upper tail. Set $\mu := \E S$, we consider  
$$ \P (S \ge \mu + T). $$

Write 

$$Z_i :=  Z_i \BJ_i  +  Z_i \I_i $$ where $\BJ_i$ is the indicator of the event $|Z_i| \le \tau'$ and $\I_i =1 - \I_i$. Thus
$$ S :=   \sum_i Z_i \I_i + \sum_i Z_i \BJ_i =  Q +  S(1) . $$ Let $\mu_j$ be the expectation of $S (j)$. Then 

$$\P ( S \ge \mu + T) \le \P( Q \ge \mu_1 + T/2) + \P (S (1) \ge  \mu_2 + T/2 ). $$

We can use Lemma \ref{Bennett} to bound $\P (Q \ge \mu_1 + T/2) $, which provides a  bound better than \eqref{B} as now $\tau' < \tau$. 
On the other hand,  if the probability that $|Z_i| \ge \tau'$ is  small, then we can bound $\P (S(1) \ge \mu_2 + T/2 )$ in a different way,  exploiting the fact that there will be very few non-zero summands  in $S (1)$. 

We can (and have to)  further refine this idea by considering a sequence of $\tau'$, breaking $S$ into the sum of $Q$ and $S(k), 1\le k \le M$, for a properly chosen $M$. 
This will be our leading idea   to bound the difference probability $p_l$ in the next section.

On the abstract level, our method bears a similarity to the chaining argument from the theory of Banach spaces. We are going to discuss this point in Section \ref{remark}.


\section{Proof of Theorem \ref{theorem:main}} \label{details} 

For the sake of presentation, we  assume that $x_{ij} = \chi_{ij} \xi_{ij}$ where $\chi_{ij}$ are iid Bernoulli random variables with mean $\theta$ and $\xi_{ij}$ are iid Rademachers random variables.
In fact, $p \geq C n \log^3 n$ is sufficient for the Rademacher case. 
The proof can be easily modified  for $\xi_{ij} $ being general sub-gaussian at the cost of a $\sqrt{\log n}$ factor  in the bound for $p$ (See Section \ref{extension}). We recall the notation $\mu_{min } = p \sqrt {\theta/n}, 
\mu_{max} = p \theta$; $\mu_v := \E \| X^T v \| _1 $.  $B$ is the set of all vectors of unit $l_1$ norm.

We set $p = C n \log ^3 n $, for a sufficiently large constant $C$. Let $T :=   \frac{ c_0  \mu _{min}} {\log n } $ for a small constant $c_0 >0$ and 
$K :=  \lceil \frac{ 6\mu_{max} }{ T } \rceil$.

\subsection{$\alpha$-nets in $l_{\infty} $ norm } \label{nets}

\begin{lemma}  \label{smallnet} 
For any $1 \ge \alpha \ge 2/n $, $B$ admits an $\alpha$-net in  $l_{\infty} $ norm of size at most $\exp( 2 \alpha^{-1} \log n )$. 
\end{lemma}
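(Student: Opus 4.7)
The plan is to exploit a sparsity-like feature of the unit $l_1$-sphere $B$: for any $v \in B$, the set $S(v) := \{j : |v_j| \ge \alpha\}$ satisfies $|S(v)| \le \lfloor 1/\alpha \rfloor$, because $\sum_j |v_j| = 1$ and each element of $S(v)$ already contributes at least $\alpha$ to this sum. This is precisely what makes $l_\infty$-covering of $B$ drastically cheaper than $l_1$-covering; already the crude approximation obtained by zeroing out every coordinate outside $S(v)$ has $l_\infty$-error at most $\alpha$.

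Given this observation, I would construct the net explicitly. For each $v \in B$, define $u = u(v) \in \R^n$ coordinatewise by setting $u_j$ to be the nearest integer multiple of $\alpha$ to $v_j$ when $|v_j| \ge \alpha$, and $u_j = 0$ otherwise. Then $|u_j - v_j| \le \alpha$ for every $j$, so $\|v - u\|_\infty \le \alpha$. Moreover, $u$ is supported on a set of size $k \le \lfloor 1/\alpha \rfloor$, and each nonzero coordinate of $u$ is an integer multiple of $\alpha$ lying in $[-(1+\alpha), 1+\alpha]$, hence takes at most $2\lfloor 1/\alpha \rfloor + 3 \le 3/\alpha$ possible values (using $\alpha \le 1$). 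Taking $\CN$ to be the collection of all such $u$ (ranging over all choices of support and values) gives an $\alpha$-net of $B$ in $l_\infty$ norm.

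It remains to count. The size of $\CN$ is bounded by
\[
|\CN| \le \sum_{k=0}^{\lfloor 1/\alpha \rfloor} \binom{n}{k} \left(\frac{3}{\alpha}\right)^k
\le \left(1 + \tfrac{1}{\alpha}\right) \binom{n}{\lfloor 1/\alpha \rfloor} \left(\frac{3}{\alpha}\right)^{\lfloor 1/\alpha \rfloor},
\]
since the summands grow in $k$ in the relevant range. Using $\binom{n}{k} \le n^k$ and the hypothesis $\alpha \ge 2/n$ (which forces $3/\alpha \le 3n/2$), a direct calculation yields $\log |\CN| \le 2 \alpha^{-1} \log n$, giving the claimed bound $\exp(2 \alpha^{-1} \log n)$.

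There is no deep obstacle here; the argument is essentially a counting estimate resting on the single structural observation about $|S(v)|$. The only point that requires a little care is the boundary regime $\alpha = 2/n$, where $1/\alpha$ is of order $n$, the factor $(3/\alpha)^{1/\alpha}$ contributes roughly $n^{1/\alpha}$, and $\binom{n}{\lfloor 1/\alpha \rfloor} \le n^{1/\alpha}$; the product exactly saturates the target $n^{2/\alpha}$, which explains why the constant $2$ in the exponent is essentially tight for the present discretization.
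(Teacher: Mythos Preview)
Your proof is correct and follows essentially the same approach as the paper: round coordinates to integer multiples of $\alpha$, observe that the resulting net points are $\lfloor 1/\alpha\rfloor$-sparse, and count supports and values. The only cosmetic difference is that you first isolate the large-coordinate set $S(v)$ before rounding, whereas the paper rounds every coordinate and then infers sparsity of the net point from $\|u\|_1\le 1$; the counting and the final bound coincide.
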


\begin{proof} Let $\CN$ be the collection of all vectors $v \in B$, whose coordinates  are integer multiples of $\alpha$. Obviously,  $\CN$ is an $\alpha$-net of $B$ 
in $l_{\infty}$ norm. Furthermore, any $v \in \CN$ satisfies $\|v\|_1 \le 1$, so it has at most $k:=  \alpha^{-1} $ non-zero coordinates. If a coordinate is non-zero, it can take at most $2\alpha^{-1} +1 \le 3k$ values. 
Therefore, 

$$| \CN | \le \sum_{i=0}^k {n \choose i} (3k )^k . $$ 

As $\alpha \ge 2/n$, the RHS is at most 

$$n {n \choose k } (3k)^k  \le n (\frac{en }{k} \times 3k) ^{k} = n ( 2en )^{k}  \le \exp( 2 \alpha^{-1} \log n ) . $$ 
 \end{proof}

The key here is that we consider an $\alpha$-net in $l_{\infty}$ norm, rather than in  $l_1$ norm, which appears to be a natural choice.  

\subsection{ Building a  nested sequence}  \label{sequence} 

Recall that   $\CN_0$  is the set of vectors $v$  in $B$ whose coordinates are integer multiples of $n^{-3}$.  We have 

\begin{equation} \label{boundN0} |\CN_0 |  \le (2n^3 +1) ^n \le \exp( 4 n \log n ). \end{equation} 

Consider the sequence $\alpha_0 =2/n; \alpha_l =2 \alpha_{l-1}$ for $l=1, \dots, L$, where $L \le \log_2 n $ is the first index such that $\alpha_L > 1/2$. 
Let  $\CN_l'$  be an $\alpha_l$-net of $B$ in the $l_{\infty}$ norm. By Lemma \ref{smallnet}, we can choose $\CN_l'$ such that  

\begin{equation} \label{boundNi1}  |\CN_l ' |  \le \exp( 2 \alpha_l^{-1} \log n ).  \end{equation}

We now build a nested sequence $\CN_L \subset \CN_{L-1} \subset \dots \subset \CN_1 \subset \CN_0$ as follows. Assume that $\CN_{l-1}$ has been built. Use the points in $\CN_l'$ as centers to 
construct a Voronoi partition of the points of $\CN_{l- 1}$  with respect to the $l_{\infty} $ norm (ties are broken arbitrarily). For each point $u \in \CN_l'$, let $C_u$ be the subset of $\CN_{l-1}$ corresponds to  $u$. 
By definition,  $\| u-v\| _{\infty} \le \alpha_l$  for any $v \in C_u$,

Partition the interval $[\mu_{min}, \mu_{max}]= [p \sqrt{ \theta/n}, p \theta ]$ into $K$  intervals $I_1, \dots, I_K$ of equal lengths. We partition $C_u$ further into $K$ subsets $C_{u,j}, 1 \le j \le K$, where 
$v \in C_{u,j}$ if $\E \| Xv \| _1 \in I_j $. By this construction, if $v, w$ belong to the same $C_{u,j}$, then  by the definition of $K$, we have the key relations 

\begin{equation} \label{keybounds}  \| v-w\| _{\infty} \le 2  \alpha_l \,\,\, {\rm and} \,\,\,  | \E \| X v \| _1 - \E \| X w \| _1 | \le p\theta /K \le T /6 . \end{equation}

From  each set $C_{u,j}$ choose an arbitrary element $v$.  Thus,  each $u \in \CN_l'$ gives rise to a   set $R_u$ of $K$ elements ($R$ stands for representative). Define 

$$\CN_l  := \cup_{u \in \CN_l' }  R_u. $$ 

It is clear that $\CN_l \subset \CN_{l-1} $ and 

\begin{equation} \label{boundNi2}  |\CN_l |  \le K | \CN_l' |  \le K \exp( 2 \alpha_l^{-1} \log n ) . \end{equation}

\subsection{Bounding the differences} \label{difference} 

Consider  the construction of $\CN_l$, $1 \le l \le L$, from Section \ref{nets}.  Let $v \in \CN_l$. Thus, $v \in C_{u,j}$ for some $u \in \CN_l'$ and $1 \le j \le K$. 
Consider another point $w \in \CN_{u, j}$. Our main task is to show

\begin{lemma} \label{hardlemma} For all pairs $v,w$ as above
\begin{equation} \label{rhobound} \rho(v, w) :=  \P (| \| X^T v\| _1 - \| X^T  w\| _1| \ge  T ) \le \exp(-  5 \alpha_l^{-1} \log n) . \end{equation}   \end{lemma}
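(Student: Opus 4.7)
The plan is to bound $S := \|X^T v\|_1 - \|X^T w\|_1 = \sum_{i=1}^p Z_i$, where $Z_i := |X_i v| - |X_i w|$ with $X_i$ the $i$-th row of $X$. Writing $D := v-w$, the hypotheses on the cluster give $\|D\|_\infty \le 2\alpha_l$, $\|D\|_1 \le 2$, and $|\E S| = |\mu_v - \mu_w| \le T/6$, so it suffices to estimate $\P(|S - \E S| \ge 5T/6)$. By the triangle inequality $|Z_i| \le |X_i D|$, and
\[
\Var S \le p\,\theta\,\|D\|_2^2 \le p\,\theta\,\|D\|_\infty\|D\|_1 \le 4p\theta\alpha_l.
\]

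With these moments, the first Bernstein exponent $T^2/\Var S$ is of order $c_0^2 C\log n/\alpha_l$, comfortably beating $5\alpha_l^{-1}\log n$ for $C$ large. The obstruction is the $\tau$-dependent term: the only available pointwise bound is $|Z_i| \le \|D\|_1 \le 2$, giving $T/\tau = O(T)$. Since $T$ is only of order $\sqrt{n\theta}\,\log^2 n$ while $\alpha_l^{-1}$ can be as large as $n/2$, this falls short by a factor of order $n$. This is precisely the situation the refined Bernstein strategy sketched in Section \ref{outline} was built to address.

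Following that strategy, I introduce a geometric truncation ladder $\tau_k := 2^k\alpha_l$, $k=0,1,\ldots,M$, where $M = O(\log n)$ is the first index with $\tau_M \ge \|D\|_1$, and decompose each $Z_i$ by the ring $(\tau_{k-1}, \tau_k]$ in which $|Z_i|$ falls. Summing over $i$ gives $S = Q + \sum_{k=1}^M S_k$, where every summand of $Q$ is bounded by $\tau_0 = \alpha_l$ and the variance estimate survives unchanged. Allocating a constant fraction of the deviation budget, say $5T/12$, to $|Q - \E Q|$ and applying Lemma \ref{Bennett} yields both Bernstein exponents at least $5\alpha_l^{-1}\log n$: shrinking $\tau$ from $2$ down to $\alpha_l$ supplies exactly the factor $\alpha_l^{-1}$ of improvement needed in the second Bernstein term. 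For each tail block $S_k$, $k\ge 1$, I use $|S_k|\le \tau_k N_k$ with $N_k := \#\{i:|Z_i|>\tau_{k-1}\}$. The crucial estimate is a \emph{sparsity-aware} tail bound: since $|X_iD| \le 2\alpha_l \cdot s_i^D$ where $s_i^D$ counts the nonzeros of row $X_i$ inside $\operatorname{supp}(D)$ and is $\mathrm{Binomial}(f,\theta)$ with $f \le n$, a Chernoff bound gives
\[
q_k := \P(|Z_i| > \tau_{k-1}) \le \P(s_i^D \ge 2^{k-2}) \le \bigl(e f \theta / 2^{k-2}\bigr)^{2^{k-2}},
\]
which decays super-geometrically in $k$ (far faster than any Bernstein bound applied directly to $|X_iD|$ could give). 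A Chernoff bound on the Bernoulli sum $N_k$ then yields $\P(N_k \ge m_k) \le (e p q_k / m_k)^{m_k}$, and I choose $m_k$ so that this beats $\exp(-5\alpha_l^{-1}\log n)/M$ while $\tau_k m_k$ fits inside the per-level deviation budget $T_k := T/(2M)$. A union bound over $k = 1,\ldots,M$ finishes the proof.

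The \emph{main obstacle} is balancing the choices of $m_k$ across all $M = O(\log n)$ levels: at small $k$ the Chernoff tail of $q_k$ is only modestly small, forcing $m_k$ to be comparable to $\alpha_l^{-1}\log n$, while at large $k$ the super-geometric decay of $q_k$ allows $m_k$ to shrink to an absolute constant. The extra $\log^3 n$ in $p = Cn\log^3 n$ (compared with the conjectural $n\log n$) is precisely the slack needed to make the two constraints $\sum_k \tau_k m_k \le T$ and $\P(N_k \ge m_k) \le \exp(-5\alpha_l^{-1}\log n)/M$ compatible for every $k \le M$.
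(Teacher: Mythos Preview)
Your architecture---split $S$ into a bounded core $Q$ plus dyadic tail blocks, Bernstein on $Q$, counting argument on each block---is exactly the paper's strategy. The gap is in your tail estimate for $q_k$. Your ``sparsity-aware'' bound $q_k \le (ef\theta/2^{k-2})^{2^{k-2}}$ uses only $\|D\|_\infty \le 2\alpha_l$ to write $|X_iD| \le 2\alpha_l\,s_i^D$, and is vacuous whenever $2^{k-2} \le ef\theta$. There is no nontrivial bound on $f = |\operatorname{supp}(D)|$ beyond $f \le n$, so $f\theta$ can be as large as $n\theta$ (up to order $\sqrt{n}$ when $\theta \sim n^{-1/2}$), and even in the sparsest case $\theta = 2/n$ one has $f\theta \le 2$, making the bound trivial for $k \le 4$. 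In that range $N_k$ can genuinely be $\Theta(p)$ (indeed $\P(s_i^D \ge 1) = 1-(1-\theta)^f$ is $\Theta(1)$), so $\sum_{k\le 4}\tau_k N_k = \Theta(\alpha_l p) = \Theta(\log^3 n)$ already exceeds $T = \Theta(\log^2 n)$ when $\alpha_l \sim 1/n$. The same issue invalidates your treatment of $\E Q$: with cutoff $\tau_0 = \alpha_l$, $\P(|Z_i|>\tau_0)$ is $\Theta(1)$, so $\E Q$ is not within $O(T)$ of $\E S$ and allocating budget to $|Q-\E Q|$ does not by itself control $|Q|$.

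The paper's fix is to bound $\rho_k := \P(|X_1 D| \ge \tau_k)$ by Bernstein applied to the scalar sum $X_1 D = \sum_j x_{1j}D_j$, which exploits \emph{both} the term bound $|x_{1j}D_j| \le 2\alpha_l$ and the variance $\E(X_1D)^2 = \theta\|D\|_2^2 \le 4\theta\alpha_l$ (the latter uses $\|D\|_1 \le 2$, which your sparsity bound never touches). This gives $\rho_k \le 2\exp(-\min\{\tau_k^2/(8\alpha_l\theta),\, \tau_k/(4\alpha_l)\})$, and the paper then sets the core cutoff at the level $\tau_M$ where this exponent first reaches $8\log n$, i.e.\ $\tau_M \asymp \max\{\alpha_l\log n,\, \sqrt{\alpha_l\theta\log n}\}$. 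With that choice $\rho_k \le n^{-8}$ for every block, the counting argument goes through uniformly in $k$, and simultaneously $|\E Q - \E S| = o(n^{-6})$. If you replace your sparsity bound by this Bernstein bound and move the core boundary up from $\alpha_l$ to $\tau_M$, the rest of your sketch becomes essentially the paper's proof.
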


The rest of this section is devoted to the proof of this lemma. By \eqref{keybounds}, we have 

\begin{equation} \label{keybounds1}  \| v-w\| _{\infty} \le 2  \alpha_l \,\,\, {\rm and} \,\,\,  | \E \| X ^T v \| _1 - \E \| X ^T w \| _1 | \le p\theta /K \le T/6 . \end{equation} 


\noindent Define $Z_I = |X_i v| - |X_i w|$, where $X_i$ is the $i$th row of $X^T$; we have 

$$ \|  X^T v \| _1 - \| X^T  w \|_1 =  \sum_{i=1}^p   (|X_i v| - |X_i w| )   =\sum_{i=1}^p Z_i .$$  Set $S:= \sum_{i=1}^p Z_i$; by symmetry, it suffices to bound 

$$\P (Z_1 + \dots  + Z_p \ge  T ) :=   \P( S \ge T ). $$

\noindent  Notice that by the triangle inequality  $$|Z_i| = \Big| |X_iv | - |X_i w| \Big| \le | X_i (v-w) |. $$  Therefore, 

$$\Var Z_i  \le \E Z_i^2 \le \E |X_i (v-w) |^2  = \theta \sum_{j=1}^n (v_j -w_j)^2 . $$

\noindent Recall that  $\|v\|, \|w\| \le 1$ and $\| v-w\|_{\infty} \le \alpha_l$.  Therefore 

$$\sum_{j=1}^n (v_j -w_j)^2 \le \alpha_l \sum_{j=1}^n |v_j| + |w_j| = 2 \alpha_l . $$

\noindent This implies 

\begin{equation} \label{variance}  \Var Z_i \le \E Z_i^2 \le 2 \alpha_l \theta . \end{equation} 

We denote by $\I_{i,k}$ the event that $\tau_k < Z_i \le \tau_{k-1}$ for $k=1, \dots, M$ and 
$J_i$ the event that $|Z_i| \le \tau_M$, for a sequence $\tau_k$, $k=0, \dots, M$, where 
$\tau_0 =2$;  $\tau_i =2^{-i} \tau_0$ and $M $ is the first index so that 

\begin{equation} \label{Mbound} \min \{ \frac{\tau_M^2}{ 8 \alpha_l \theta }, \frac{\tau_M}{ 4 \alpha_l } \} \ge 8 \log n. \end{equation} 

Note  that  if $\alpha_l \le \frac{1}{32} \log^{-1} n $ then such an index $M \ge 1$ exists. We will proceed with this assumption
and cover the remaining cases  at the end of the proof. 
Apparently,

$$Z_i \le \sum_{i=1}^k Z_i \I_{i,k} + Z_i J_i. $$ Set $S(k) = \sum_{i=1}^p Z_i \I_{i,k} $ for $k=1, \dots, M$ and 
$Q = \sum_{i=1}^p Z_i \BJ_i $. We have

$$\P (S \ge T) \le \P (Q \ge T/2) + \sum_{k=1}^M \P( S(k) \ge \frac{T}{2M} ). $$

To bound $\P(Q \ge T/2)$, we notice that (see \eqref{rhobound})  the choice of  $\tau_M$ guarantees that  
$\P (\BJ_i)  \ge 1 - 2n^{-8} $ for all $i=1,\dots, p$. As $|Z_i| \le 2$ with probability 1, it follows that 

$$| \E Z_i \BJ_i - \E Z_i |  \le 4 n^{-8} $$ and so 

$$| \E Q - \E S | \le  4p n^{-8} = o( n^{-6}), $$ as $p= o(n^2)$.
On the other hand, by \eqref{keybounds1},   $T \ge 5 (\E S + n^{-6}) $. Thus 

$$\P (Q \ge T/2 ) \le \P( Q \ge \E Q +  T/4 ). $$  By definition, $Q$ is sum of $p$ iid random variables, each is  bounded by $\tau_M$ in absolute value with probability 1. Furthermore, 
by \eqref{variance}

$$\Var Q = p \Var Z_1 \BJ_1 \le  p \E Z_1^2  \le  2 \alpha_l \theta p . $$ By Lemma \ref{Bennett}, we have 

\begin{equation} \label{Qbound} 
\P( Q \ge \E Q + T/4) \le 2 (\exp (- \min \{ \frac{(T/4)^2}{8\alpha_l \theta p } , \frac{T/4}{ 4 \tau_M } \})  =  2 \exp (- \min \{ \frac{T}{128 \alpha_l \theta p }, \frac{T}{16 \tau_M } \}). \end{equation}

Now we bound  $\P( S(k) \ge \frac{T}{2M} )$, for $k=1, \dots, M$.  Recall that $S(k) := \sum_{i=1}^p Z_i \I_{i,k} $ is a sum of iid non-negative  random variables, each is either 0 or in $(\tau_k $ and $\tau_{k-1}]$. 
Thus, if $S(k) \ge T/2M$ there must be at least $p_k:= \frac{T/2M}{\tau_{k-1} } $ indices $i$ such that $Z_i > \tau_k $.  Let $\rho_k$ be the probability that $Z_1 > \tau_k$. Then by the union bound and the fact that 
$p= o(n^2) $, 

\begin{equation} \label{difference2}  \P( S(k) \ge \frac {T}{2M} ) \le {p \choose {p_k} }  \rho_k ^{p_k}   \le (\frac{e p }{p_k} \rho_k )^{p_k}  \le ( \frac{n^2}{2}  \rho_k)^{p_k} . \end{equation}

To complete the analysis,  we need to estimate  $\rho_k$. By definition 

$$\rho_k := \P( |X_1 v| - |X_1 w | > \tau_k ) \le \P( | X_1 ( v-w) | \ge \tau_k ) . $$  The random variable $\tilde Z_1 := X_1( v-w)  = \sum_{j=1}^n \xi_j (v_j -w_j )$ has mean 0. 
Furthermore,  by \eqref{variance}, $\Var \tilde Z_1 \le  \tilde Z_1^2 \le 2 \alpha_l \theta$. Finally, each term 
$\xi_j (v_j- w_j)$ is at most $\alpha_l$ in absolute value. Thus Lemma \ref{Bennett} implies 

\begin{equation} \label{rhobound} \rho_k \le \P( |\tilde Z_1| \ge \tau_k) \le 2  (\exp( - \min \{\frac{\tau_k^2}{ 8 \alpha_l \theta },\frac{\tau_k}{ 4\alpha_l } \} ) . \end{equation} 

This and \eqref{difference2} yield 

\begin{equation} \label{Skbound} \P(S(k) \ge \frac{T}{2M} ) \le  2\exp( - \Big( \min \{ \frac{\tau_k^2}{ 8 \alpha_l \theta }, \frac{\tau_k}{ 4\alpha_l } \} + 2 \log n \Big)  p_k ). \end{equation}  By \eqref{Mbound},

$$ \min \{ \frac{\tau_k^2}{ 8 \alpha_l \theta }, \frac{\tau_k}{ 4\alpha_l } \}  \ge 8 \log n, $$ so 

$$ \Big( \min \{ \frac{\tau_k^2}{ 8 \alpha_l \theta }, \frac{\tau_k}{ 4\alpha_l } \} + 2 \log n \Big)  p_k \ge  \frac{1}{2} \min  \{ \frac{\tau_k^2}{ 8 \alpha_l \theta }p_k,  \frac{\tau_k}{ 4\alpha_l } p_k \}. $$
By definition  $p_k = \frac{T/2M}{\tau_{k-1} } = \frac{T/4M}{\tau_k } $, as $\tau_{k-1} = 2\tau_k$. Therefore, 

$$ \frac{1}{2} \frac{\tau_k^2}{ 8 \alpha_l \theta }p_k = \frac{\tau_k T }{ 64 M \alpha_l \theta } $$ and 

$$\frac{1}{2} \frac{\tau_k}{ 4\alpha_l } p_k = \frac{T}{ 32 M \alpha_l }. $$ 

\noindent By \eqref{Qbound} and \eqref{Skbound}, we conclude that 

\begin{equation} \label{Sbound} \P( S \ge T )  \le  2 \exp (- \min \{ \frac{T^2 }{128 \alpha_l \theta p }, \frac{T}{16 \tau_M } \}) + 
\sum_{k=1}^M 2 \exp( -\min \{  \frac{\tau_k T }{ 64 M \alpha_l \theta }, \frac{T}{ 32 M \alpha_l } \}). \end{equation} 

A routine verification (see Section \ref{magnitude}) shows that once $p \ge C n \log^ 3 n$ for a sufficient large constant $C$, then the RHS in \eqref{Sbound} is at most 
$\exp (- 5 \alpha_l^{-1} \log n)$, completing the proof for the case $\alpha_l  \le \frac{1}{32} \log^{-1} n$. 

To complete the proof, we now treat  the remaining case when $\alpha_l \ge \frac{1}{32} \log^{-1} n$.. In this case, we do not need to split $Z_i$. 
Recall $S= Z_1+\dots +Z_p$ whre $|Z_i| \le 2$ with probability 1, $\E S \le T/6$ and $\Var S \le 2 p \theta \alpha_l$. By Lemma \ref{Bennett}, we have 

$$\P( S \ge T) \le \P( S \ge \E S + T/2)  \le \exp( - \min \{ \frac{T^2}{ 8 p \theta \alpha_l }, \frac{T}{8} \}). $$

By the analysis of \eqref{Sbound}, we already know that   $\frac{T^2}{ 8 p \theta \alpha_l } \ge 5 \alpha_l^{-1} \log n$. On the other hand, as $\alpha_l \ge \frac{1}{32} \log^{-1} n$

$$ \frac{T}{8} = \frac{c_0 p \sqrt {\theta /n} } {8 \log n }  =\frac{c_0 C}{8}  \sqrt {\theta n } \log^2 n  \ge  5 \alpha_l^{-1} \log n, $$ given that $c_0 C$ is sufficiently large. This completes the proof.

\subsection{Proof of the Concentration lemma} 

For $v \in \CN_l, 0 \le l \le L$, let  $Bad_l (v) $ be the event that $| \| X v\| _1 - \mu_v | \ge  2(L+1 -l) T $. 
For $l=0$, $2(L+1-l_ T = 2 (L+1) T \le  \frac{2c_0 (\log_2 n +1)  \mu_{min} }{ \log n }  \le 4 c_0  \mu_{min}$. Thus, 

$$\P (\cup_{ v \in \CN_0 }  |\| X^T v\|_1 - \mu_v | \ge 4 c_0 \mu_{min}  ) \le \P ( \cup_{v \in \CN_0}  Bad_0 (v)). $$

Assume that there is a number $p_0$ such that $\P (Bad_0 (v)) \le p_0$ for all $v \in \CN_0$. Assume furthermore that for any $1 \le l \le L$, there is a number $p_l$ such that 
for $ v \in \CN_l$ and $w \in \CN_{l-1} $ where $v$ is the representative of the set $C_{(u, k)}$ that contains $w$ 
(see the construction in Section \ref{sequence}). 

$$\P ( Bad_l (w) \backslash Bad_{l-1} (v) ) \le p_l. $$

Then by Lemma \ref{newnet}

$$\P (\cup_{v \in \CN_0} ) \le  |\CN_L| p_0 +\sum_{l=1}^{L} | \CN_{l-1}  | p_{l} . $$

To find $p_l$, notice that  if $Bad_{l-1} (w)$ holds and $Bad _l (v)$ does not, then 
$| \| X^T w \|_1  - \mu_w | \ge 2 (L+2 -l) T$ and $| \| X^T v \|_1 - \mu_v | \le 2 (L+1 -l) T$. 
By \eqref{keybounds},  $| \mu_v -\mu_w | \le T$. It  thus follows that 

$$| \| X^T w \| _1  - \| X^T v \| _1 | \ge T . $$

By the main lemma  of  Section \ref{difference}, we know that the probability of this event  is at most 
$p_l:= \exp( -5 \alpha_l^{-1} \log n)$, for all $l$.  Recall from Section \ref{sequence} that 

$$ |\CN_l  | \le K \exp( 2 \alpha_l^{-1} \log n ) =K \exp( 4 \alpha_l ^{-2} \log n ) , $$ we have

$$  \sum_{l=1}^{L}  |\CN_{l-1} | p_l  \le \sum_{l=1}^{L}  \exp ( -4 \alpha_l ^{-1} \log n )  \times K \exp (4 \alpha_{l} ^{-1}  \log n ) .$$ 

Since $K = O(n^{1/2})$ and $ \alpha_l^{-1} \log n \ge \log n $,  the RHS is at most 

$$  \sum_{l=1}^{L'}  \exp ( - .5 \alpha_l ^{-1} \log n )  = o(1). $$

To conclude, notice that by Lemma \ref{Bennett}, we can set $p_0 := 2 \exp(- \min \{ \frac{T^2}{8 p \theta }, \frac{T}{8} \} )$. As $|\CN_L| \le \exp( -2 \alpha_L^{-1} \log n ) \le \exp( 4 \log n ) $ since $\alpha_L \ge 1/2$, we have 

$$p_0 | \CN_0 |  = o(1), $$ as long as $ \min \{ \frac{T^2}{8 p \theta }, \frac{T}{8}\} \ge 5 \log n$. This condition holds if $p \ge C n \log^ 3n $ for a sufficiently large constant $C$.  
This implies that

$$\P (\cup_{ v \in \CN_0 }  \{ \| X^T v - \mu_v \| \ge 4 c_0 \mu_{min}  )  = o(1) ,$$ and we are done by \eqref{badevent2}.

\subsection{The magnitude of $p$ } \label{magnitude} 

We present the routine verification concerning the exponents in \eqref{Sbound}. This is the only place where the magnitude of $p$ matters. 
Recall that 
$T = \frac{c_0  \mu_{min} } {\log n}  = \frac{c_0 p \sqrt { \theta/n }}{ \log n} $ and $p = C n \log^3 n $ (since for the sake of exposition we are only considering the Rademacher case).  We have 

$$\frac{T^2 }{128 \alpha_l \theta p} = \frac{c_0^2  p^2  \theta /n} {128 \theta p \log^2 n } \alpha_l^{-1} = \frac{c_0^2  p}{n} \alpha_l^{-1} = c_0^2  C  \alpha_l^{-1} \log n \ge 4.1 \alpha_l^{-1} \log n, $$ provided that 
 $c_0^2 C \ge 4.1$. 
 
 By  the definition of $M$ in \eqref{Mbound}, we have 
 
 $$ 32  \log n \ge   \min \{ \frac{\tau_M^2}{ 8 \alpha_l \theta } , \frac{\tau_M}{4 \alpha_l } \} \ge 8 \log n. $$ This implies that 
 
 $$\tau_M \le \max \{  16 \sqrt { \alpha_l \theta \log n } , 128 \alpha_l \log n \} . $$ It follows that 
 
 $$ \frac{T}{16 \tau_M } \ge \min \{ \frac{T}{256 \sqrt{\alpha_ \theta \log n }} , \frac{T}{2048 \sqrt 2  \alpha_l \log n } \} .$$

 \noindent By the definition of $p$ and $T$ 
 
$$  \frac{T}{256 \sqrt{\alpha_ \theta \log n }  } =\frac{c_0 p }{ 256 \sqrt { \alpha_l n \log^3 n}} = \alpha_l^{-1} c_0 C \log n  \sqrt { n \alpha_l } \ge 4.1 \alpha_l^{-1} \log n , $$ since 
$c_0 C \ge 4.1 $ and $ n \alpha_l \ge n \alpha_0 \ge n \frac{2}{n} > 1$.  Furthermore,

$$ \frac{T}{2048 \sqrt 2  \alpha_l \log n } = \alpha_ l^{-1} \frac{c_0 C n \log^3 n \sqrt {\theta/n}} { 2048 \sqrt 2 \log n }  =\omega ( \alpha_l^{-1} \log n). $$

\noindent  Next, we bound the exponent $\frac{T}{32 M \alpha_l }$. As $M \le \log n$, we have 

$$\frac{T}{32 M \alpha_l }  \ge \frac{c_0 C n \log^2 n \sqrt{\theta /n } }{32 \alpha_l \log n} = \alpha_l^{-1} \frac{c_0 C}{32} \sqrt {\theta n } \log n \ge 4.1 \alpha_l^{-1} \log n, $$ provided that 
$c_0 C /32  \ge 4.1$, since $\theta n \ge 1$. 

\noindent Finally,  we bound the exponent $\frac{\tau_k T}{64 M \alpha_l \theta } $. By definition $\frac{\tau_k^2} {8 \alpha_l \theta }  \ge 8 \log n$ and $M \le \log n$  thus

$$ \frac{\tau_k T}{64 M \alpha_l \theta } \ge   \frac{8 \sqrt { \alpha_l \theta \log n} T }{ 64 \log n \alpha _l \theta } = \alpha_l^{-1} \frac{c_0 C}{8} \sqrt {n \alpha_l } \log^{3/2} n  =\omega (\alpha_l^{-1} \log n ),  $$
concluding the proof.

 \subsection{Extension from Rademacher to general sub-gaussian variables } \label{extension}
We introduce the truncation operator $T_{\tau}: \mathbb{R}^{n \times p} \rightarrow \mathbb{R}^{n \times p}$ as
\begin{displaymath}
   (T_{\tau}[M])_{ij}= \left\{
     \begin{array}{lr}
       M_{ij} &  |M_{ij}| \leq \tau\\
       0 &  else
     \end{array}
   \right.
\end{displaymath}  
Let $\tau = \sqrt{C \log n}$ and let 
$$
X' = T_{\tau}[X].
$$

For $C$ sufficiently large, the probability that $X'= X$ is $1- o(1) $. This allows us to work with random matrix whose entries are bounded by 
$\tau$ (instead of 1 as in the Rademacher case).  The same proof will go through if we increase $p$ by $C_1 \tau$, for a sufficiently large constant $C_1$.
This means $p= O( n \log^{3.5} n)$ suffices. We round $3.5$ up to 4 for cosmetic reasons. 

\subsection{Concluding remarks} \label{remark}

There is a connection between the method of our proof and Fernique's  chaining argument  \cite{Fer1971} (see  \cite{talagrand1996majorizing} for a survey).
The goal of the chaining method is to bound the supermum $\sup_{t \in B} X_t$ where $B$ is a domain in a metrics space and $X_t$ is a Gaussian process. 
In this case,  the bad event $Bad (v)$ can roughly be defined as $X_v \ge M_v$, for some candidate value $M_v$. One then considers a chain of sets 
in order to bound $\P( \cup_{v \in B} Bad (v)) $.  This, in spirit, is similar to the purpose of  Lemma \ref{newnet}. 

After  this,  the arguments become different in all aspects. First,  in our setting, the bad event $Bad(v)$ can have any nature. 
Next,  in the chaining argument, the sets $\CN_j$  are defined using the metrics of $B$, while in our case, it is crucial to use 
a different metrics. We construct  $\CN_j$ using the $l_{\infty}$ norm, rather than the natural  $l_1$ norm used to define the domain $B$. Finally, in the chaining case it is easy to bound  
$\P( Bad (u) \backslash Bad(v)) $,   using  the fact that  $\P( |X_u- X_v| \ge t) \le 2 \exp( - \frac{t^2}{dist (u,v)^2 }) $, which is the basic property of a Gaussian process. 
In our case,  bounding $\P( Bad (u) \backslash Bad(v)) $ is an essential step (Lemma \ref{hardlemma}), which requires the development of the refined Bernstein's inequality.


\section{ The algorithm and concentration of random matrices} \label{algorithms}

As the algorithm and analysis are discussed 
extensively in \cite{SWW}, we will be brief and the readers can consult \cite{SWW} for more details. 
\cite{SWW} introduces the dictionary learning
algorithm ER-SpUD.  The key insight in the design of ER-SpUD  
is that the rows of $\x$ are likely to be the sparsest vectors in the row space of $\y$.  (This 
observation also appeared   \cite{zibulevsky2000blind} and \cite{mairal2009online}.)
\cite{SWW} proposed  to find these vectors by considering the following optimization problems.

$$
\text{minimize } \| {w^T} \y \|_1 \text{ subject to } {r^T w} = 1
$$
where ${r}$ is a row of two columns of $\y$.

Using  $l_1$ optimization for finding sparse vectors is a natural idea, and the authors of \cite{SWW} pointed out that 
such an approach was already proposed in \cite{plumbley2007dictionary}
and \cite{jaillet2010l1}. The difference is the new constraint  $r^Tw =1$. (Earlier works used different constraints.)

By a change of variables ${z = A^T w}$, ${b = A^{-1} r}$, we can consider the equivalent problem
\begin{equation} \label{optimization}
\text{minimize } \| {z^T} \x \|_1 \text{ subject to } {b^T z} = 1.
\end{equation}

The algorithm presented in \cite{SWW} is outlined below  
(for those familiar with \cite{SWW}, note that we are presenting the two-column version of ER-SpUD):
\begin{algorithm}[H]
\caption{ER-SpUD}
  \begin{algorithmic}[1]
     \State Randomly pair the columns of ${Y}$ into $p/2$ groups $g_j = \{ {Y}e_{j_{1}}, {Y} e_{j_{2}}\}$
     
     \State For $j=1, \dots , p/2$ \newline
      $\text{     }$ Let ${r}_j ={Y}e_{j_{1}} + {Y}e_{j_{2}}$, where $g_j = \{ {Y}e_{j_{1}}, {Y} e_{j_{2}}\}$  \newline 
      $\text{     }$   Solve $min_{{w}} \|{w^T} \y \|_1$ subject to $(\y {r_j})^T {w} = 1$, and set ${s_j} = 
                       {w^T} \y$.
     \State Use Greedy algorithm to reconstruct $\x$ and ${A}$. 
  \end{algorithmic}
\end{algorithm}

\begin{algorithm}[H]
\caption{Greedy}
   \begin{algorithmic}[1]
     \State Require: $S = \{s_1,\dots, s_T\} \subset \mathbb{R}^p$
     \State For $i = 1 \dots n$ \newline
                    $\text{     }$ REPEAT \newline
                    $\text{     }$ $\text{     }$ $l \gets arg \text{ } min_{{s_l} \in S}  \|{s_l}\|_0$, breaking ties arbitrarily \newline
                    $\text{     }$ $\text{     }$ ${x_i} = {s_l}$ \newline
                    $\text{     }$ $\text{     }$ $S = S \backslash \{ {s_l} \} $ \newline
                    $\text{     }$ UNTIL $rank([{x_1}, \dots , {x_i}]) = i$
     \State Set $\x = [{x_1}, \dots , {x_i}]^T$, and ${A} = \y \y^T (\x \y^T)^{-1}$
    \end{algorithmic}
\end{algorithm}


A key technical step in analyzing ER-SpUD is the following lemma, which asserts that if $p$ is sufficiently large, then with high probability $\| X^T v \|_1$ is close to its mean, 
simultaneously for all unit vectors $v \in \R^n$. 

\begin{lemma} \label{old} 

For every constant $1 \ge \delta >0$ there is a constant $C_0 >0$ such that the following holds. 
If $\theta \geq \frac{1}{n}$ and $p  \ge C_0 n^2 \log^2  n $, then with  probability $1 - o(1) $, for all $v \in \R^n$ 

\begin{equation}  \label{newmain}|   \|X^T v  \| _1 - \E \|  X^T v  \|_1 |  \le  \delta  \E \| X^T v \|_1 . \end{equation} 
\end{lemma}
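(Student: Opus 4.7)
The plan is to prove Lemma \ref{old} via the classical $\epsilon$-net/union-bound strategy sketched in Section \ref{outline}, which is essentially the original Spielman--Wang--Wright argument. The claim is homogeneous in $v$ (rescaling $v$ leaves $|\|X^T v\|_1 - \mu_v| / \mu_v$ unchanged), so it suffices to prove \eqref{newmain} uniformly over the unit $\ell_1$-sphere $B$. For $v \in B$ one has the pointwise envelope
$$\mu_{\min} := p\sqrt{\theta/n} \;\le\; \mu_v \;\le\; p\theta \;=:\; \mu_{\max},$$
the lower bound being attained (up to constants) at $v = n^{-1}\mathbf{1}$ and the upper bound at a coordinate vector. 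In particular, an additive deviation of size $(\delta/2)\mu_{\min}$ implies the multiplicative bound \eqref{newmain} with constant $\delta$.

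Next I would take $\CN_0 \subset B$ to be the set of vectors with coordinates in $n^{-3}\Z$, so that $\CN_0$ is an $n^{-2}$-net of $B$ in $\ell_1$ and $|\CN_0| \le \exp(O(n \log n))$. Given any $v \in B$ and its nearest $u \in \CN_0$, one checks that $|\mu_v - \mu_u| = o(\mu_{\min})$ (since $\mu_v$ is Lipschitz in $v$ with constant at most $p\theta$ in $\ell_1$) and that $|\|X^T v\|_1 - \|X^T u\|_1| \le \|X^T(v-u)\|_1$ is $o(\mu_{\min})$ with probability $1-o(1)$ via a crude bound on the $\ell_1 \to \ell_1$ operator norm of $X^T$. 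Thus it suffices to prove
$$\P\!\left(\bigcup_{v \in \CN_0}\{\,|\|X^T v\|_1 - \mu_v| \ge (\delta/2)\mu_{\min}\}\right) = o(1).$$

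For a fixed $v \in \CN_0$, write $\|X^T v\|_1 = \sum_{i=1}^p |X_i v|$ as a sum of $p$ iid summands. In the Rademacher case each summand is bounded by $\|v\|_1 = 1$ in absolute value and the variance of the sum is at most $p\theta$. Bernstein's inequality (Lemma \ref{Bennett}) applied with $T = (\delta/2)\mu_{\min} = (\delta/2)p\sqrt{\theta/n}$ then gives
$$\P(\text{Bad}(v)) \le 2\exp(-c\, p/n)$$
for a constant $c = c(\delta)$; combining with $|\CN_0| \le \exp(O(n \log n))$ via the union bound, the sample requirement $p \ge C_0 n^2 \log n$ makes the total probability $o(1)$ in the Rademacher setting.

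To extend from Rademacher to general sub-gaussian $\xi_{ij}$, and thereby recover the stated bound $p \ge C_0 n^2 \log^2 n$, I would use the truncation idea of Section \ref{extension}: replace $X$ by the matrix $X'$ obtained by zeroing out all entries of magnitude exceeding $\tau = \sqrt{C \log n}$. The event $\{X' = X\}$ has probability $1-o(1)$, and on this event every summand $|X_i v|$ is bounded by $\tau\|v\|_1 = \tau$ rather than by $1$, so the Bernstein bound picks up a factor $\sqrt{\log n}$ in the exponent, which is absorbed by allowing one extra $\log n$ in $p$. The only mildly delicate step is the net-approximation control of $\|X^T v\|_1$ uniformly over all of $B$ (not merely $\CN_0$), for which working in the $\ell_1$-norm matched to $B$ is the right choice at this (suboptimal) sample size; the finer ideas of Section \ref{outline} are only needed when one tries to push $p$ down to $n\,\mathrm{polylog}\,n$ as in Theorem \ref{theorem:main}.
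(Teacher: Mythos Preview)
Your proposal is correct and follows essentially the same route as the paper. The paper does not give a self-contained proof of Lemma \ref{old}; it attributes the lemma to \cite{SWW} and sketches exactly the $\epsilon$-net/Bernstein argument you describe in Section \ref{outline} (homogeneity reduction to $B$, the discrete net $\CN_0$ of mesh $n^{-3}$, Bernstein with $\tau=1$ and $\Var S \le p\theta$, then the union bound over $\exp(O(n\log n))$ points), noting that the passage from Rademacher to general sub-gaussian costs an extra logarithm in $p$ --- which you recover via the truncation at $\tau=\sqrt{C\log n}$ from Section \ref{extension}. Your remark that the random (not just expected) net-approximation step needs a crude $\ell_1\!\to\!\ell_1$ operator-norm bound on $X^T$ is a detail the paper's sketch glosses over, and it is handled exactly as you indicate.
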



This lemma appears implicitly in \cite{SWW}. Dan Spielman pointed out to us that this would imply the critical \cite[Lemma 17]{SWW}. The bound $p \ge C n^2 \log^2  n$ 
is of importance in the proof of this lemma.

Our Theorem \ref{theorem:main}, which pushes $p$ to $C n \log^{4} n$, is an improved version of Lemma \ref{old}.

\vskip2mm 

With Theorem \ref{theorem:main} in hand, let us now sketch the proof of Theorem \ref{theorem:main1}, following the analysis in \cite{SWW}. 

Notice that if the solution of the $l_1$ optimization problem, $z_*$, is 1-sparse, then the algorithm will recover a row of $X$.   
The proof of the theorem relies on showing that $ z_*$, is supported on the non-zero indices of $ b$ and that with high-probability, $ z_*$ is in fact 1-sparse.  The first goal allows us to focus our attention on a submatrix of $\x$ which will be convenient for technical reasons.  To 
address this first issue, we prove the following.

\begin{lemma} \label{SpLem}
Suppose that $\x$ satisfies the Bernoulli-Subgaussian model.  There exists a numerical constant $C>0$ such that if $\theta n \geq 2$ and
$$
p>Cn \log^{4} n
$$
then the random matrix $\x$ has the following property with probability at least $1-o(1)$.

(\textbf{P1}) For every {b} satisfying $\| {b} \|_0 \leq 1/8\theta$, any solution ${z_*}$ to the optimization 
problem \ref{optimization}
has $supp({z_*}) \subseteq supp({b})$.
\end{lemma}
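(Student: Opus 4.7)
The plan is to argue by contradiction via the first‑order structure of the $\ell_1$ minimization. Suppose some optimal $z_*$ satisfies $\operatorname{supp}(z_*) \not\subseteq S := \operatorname{supp}(b)$, and decompose $z_* = z_S + v$ with $v := z_*|_{S^c} \neq 0$. Since $b$ vanishes outside $S$, the truncation $z_S$ still satisfies $b^T z_S = 1$ and is therefore feasible, so optimality of $z_*$ gives $\|z_*^T X\|_1 \leq \|z_S^T X\|_1$.

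Partition $[p] = T \sqcup T^c$ with $T := \{j : X_{ij} = 0 \text{ for all } i \in S\}$. For $j \in T$ we have $z_S^T X_j = 0$, so $|z_*^T X_j| = |v^T X_j|$; for $j \in T^c$ the reverse triangle inequality gives $|z_*^T X_j| \geq |z_S^T X_j| - |v^T X_j|$. Summing and using $\|z_S^T X\|_1 = \sum_{j \in T^c} |z_S^T X_j|$ yields
\[
\|z_*^T X\|_1 \;\geq\; \|z_S^T X\|_1 + \sum_{j \in T} |v^T X_j| - \sum_{j \in T^c} |v^T X_j|.
\]
Combined with optimality, this forces $\sum_{j \in T}|v^T X_j| \leq \sum_{j \in T^c}|v^T X_j|$; to reach a contradiction it thus suffices to show that the reverse \emph{strict} inequality holds with probability $1-o(1)$, uniformly over all admissible $(S,v)$ with $|S| \leq 1/(8\theta)$ and $v \neq 0$ supported on $S^c$.

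The crucial probabilistic point is that $T$ depends only on the $S$-rows of $X$, while every quantity $|v^T X_j|$ with $\operatorname{supp}(v) \subseteq S^c$ depends only on the complementary rows; hence the two are independent. In particular $\E\sum_{j \in T}|v^T X_j| = (1-\theta)^{|S|}\,\E\|X^T v\|_1$, and under $|S| \leq 1/(8\theta)$ with $\theta = o(1)$ we have $(1-\theta)^{|S|} \geq e^{-1/8}(1 - o(1)) > 1/2$. A Chernoff bound on $|T|$ (a sum of iid Bernoulli indicators) produces $|T| \geq \alpha p$ for some fixed $\alpha > 1/2$ with overwhelming probability. Conditioning on $T$, the submatrix $X_{S^c,T}$ has iid Bernoulli–subgaussian entries of shape $(n-|S|) \times |T|$ with $|T| \geq C'(n-|S|)\log^4(n-|S|)$ once $C$ is taken sufficiently large, so Theorem~\ref{theorem:main} applies to $X_{S^c,T}$ and (separately) to $X$ itself, giving uniformly in $v$ supported on $S^c$
\[
\sum_{j \in T} |v^T X_j| \;\geq\; (1-\epsilon)\alpha\,\mu_v, \qquad \|X^T v\|_1 \;\leq\; (1+\epsilon)\,\mu_v,
\]
so that $\sum_{j \in T^c}|v^T X_j| \leq [(1+\epsilon) - (1-\epsilon)\alpha]\mu_v$. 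Choosing $\epsilon$ small enough makes $(1-\epsilon)\alpha > (1+\epsilon)-(1-\epsilon)\alpha$ (which needs only $\alpha > 1/2$) and delivers the required strict inequality.

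The principal obstacle is uniformity in $S$: when $\theta = \Theta(1/n)$ the admissible supports number $\binom{n}{\leq n/16} = \exp(\Omega(n))$, while the tail of Theorem~\ref{theorem:main}, even with $C$ enlarged, decays only like $\exp(-\Omega(\log^2 n))$, so a direct union bound over $S$ breaks down. To address this, I plan to adapt the economical chaining framework of Section~\ref{outline} to the joint parameter $(v,S)$: the $\ell_\infty$-hierarchy on $v$ is retained, while an additional coarse-to-fine hierarchy on $S$ (with supports at consecutive levels differing in only a few coordinates, so that the relevant indicator variables change in a controlled way) is threaded into the same chain. The union over $S$ is then absorbed into the telescoping of the refined Bernstein inequalities rather than incurred at the end. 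Getting the level counts on $S$ and the difference probabilities on $v$ to telescope simultaneously without sacrificing the $n\log^4 n$ bound is where the bulk of the technical work will lie.
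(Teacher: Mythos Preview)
Your decomposition $z_*=z_S+v$, the triangle-inequality bound, the independence between the $S$-rows (which determine your column-set $T$) and the $S^c$-rows (which determine $v^TX_j$), the Chernoff control of $|T|$, and the two applications of Theorem~\ref{theorem:main} all match the paper's sketch line for line; your $T$ is precisely the complement of the paper's column-set $S$, and the inequality $\sum_{j\in T}|v^TX_j|>\sum_{j\in T^c}|v^TX_j|$ is exactly the paper's $\|z_1^TX\|_1-2\|z_1^TX^S\|_1>0$.

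Where you diverge is in the last paragraph. The paper does \emph{not} union-bound over the supports $J=\operatorname{supp}(b)$, and it does not introduce any chaining over $(v,S)$-pairs. It simply writes ``it suffices to show the result for the worst case $|S|=p/4$'' and invokes Theorem~\ref{theorem:main} on $X$ and on $X^S$: once $|S|\le p/4$ is secured by Chernoff (which \emph{can} be union-bounded over $J$, since that failure probability is $\exp(-cp)$), the conditional independence lets one treat $X_{J^c,S}$ as a fresh Bernoulli--subgaussian matrix and apply Theorem~\ref{theorem:main} to it. Your worry is legitimate in that the failure probability of Theorem~\ref{theorem:main} is only polynomially small in $n$ (trace through Section~\ref{details}: the final bound is dominated by the level with $\alpha_L^{-1}\approx 2$, giving $n^{-O(1)}$), so a naive union bound over $\binom{n}{\le 1/(8\theta)}=\exp(\Theta(n))$ supports would indeed fail when $\theta=\Theta(1/n)$. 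The paper's sketch simply does not confront this; the conditional application of Theorem~\ref{theorem:main} is carried out for a single $J$ and the uniformity is left implicit.

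Your proposed joint $(v,J)$-chaining is therefore an addition to, not a reconstruction of, the paper's argument, and as written it is a plan rather than a proof: you would have to exhibit a hierarchy on subsets $J$ whose level sizes and difference probabilities telescope against the $\exp(\Theta(n))$ count, and nothing in the existing $\ell_\infty$-chaining of Section~\ref{outline} suggests how to do that. If you want to stay close to the paper, the more natural route is to look for a direct upper bound on $\|z_1^TX^{S(J)}\|_1$ that decouples $J$ from the concentration step (for instance via $S(J)=\bigcup_{i\in J}S(\{i\})$, which reduces the union over $J$ to a union over the $n$ single rows), rather than embedding $J$ into the chain.
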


\textit{ Sketch of the Proof of Lemma \ref{SpLem}.} 
We let $J$ be the indices of the $s$ non-zero entries of
${b}$.  Let $S$ be the indices of the nonzero columns in $\x_J$, and let ${z_0} = {P_J} {z_*}$ (the restriction to those coordinates indexed by $J$).  Define ${z_1} = {z_*}-{z_0}$.  We demonstrate that ${z_0}$ has at least as low an objective as ${z_*}$
 so ${z_1}$ must be zero.  One can show using the triangle inequality that
 $$
 \| {z_*^T} \x \|_1 \geq \|{z_0^T} \x \|_1 - 2 \| {z_1^T} \x^S \|_1 + \| {z_1^T} \x \|_1. 
 $$
Thus, if $\| {z_1^T} \x \|_1 - 2 \| {z_1^T} \x^S \|_1>0$, then ${z_0}$ has a lower objective value.
We need this inequality to hold for all ${z}$ with high probability.
Notice that 
$$
\E[ \| {z^T} \x \|_1 - 2 \| {z^T} \x^S \|_1 ] = (p-2|S|) \E | {z^T} \x_1 |
$$
It is easy to show that $|S| < p/4$ with high probability so $(p-2|S|) > 0$ with high probability.
Therefore, if we can show that $\| {z^T} \x \|_1 - 2 \| {z^T} \x^S \|_1$ is concentrated near its positive expectation we
are done.  

We see that it suffices to show the result for the worst case $|S| = p/4$.  Now we make critical use of  Theorem
\ref{theorem:main}, which asserts that with high probability,
$$
\| {z^T} \x \|_1 \geq \frac{5}{8} \E\| {z^T} \x \|_1 = 
\frac{5p}{8} \E | {z^T} \x_1 |.
$$   
and
$$
\| {z^T} \x^S \|_1 \leq \frac{1}{2} \E \|{z^T} \x^S \|_1 = \frac{p}{8} \E | {z^T} \x_1 |.
$$
so
$$
\| {z^T} \x \|_1 - 2 \| {z^T} \x^S \|_1 \geq \frac{p}{2}\E | {z^T} \x_1 | >0.
$$

Having proved Lemma \ref{SpLem}, the rest of  the proof is relatively simple and follows \cite{SWW} exactly.  
The success of the algorithm now depends on the existence of a sufficient gap between the largest and second largest entry in ${b}$.  The intuition is that if $\x$ preserved
the $l_1$ norm exactly, i.e. $\| {z^T} \x \|_1 = c \|z \|_1$, then the minimization procedure will output the vector ${z}$ of smallest $l_1$ norm  such that 
${b^T z} =1$, which is just ${e_{j_*}} / b_{j_*}$, where $j_*$ is the index of the element of ${b}$ with the largest magnitude.  However, $\x$ only preserves
the $l_1$ norm in an approximate sense.  Yet, the algorithm will still extract a column of $\x$ if there is a significant gap between the largest element of ${b}$
and the second largest.

\section{Rectangular dictionaries and  Theorem \ref{theorem:main2} } \label{appendix:rectangular}

We now present a generalization of ER-SpUD, which enables us to deal with rectangular dictionary. 
Consider a full rank matrix $A$ of size $n > m$, such that $n > m $, and the equation $AX= Y$. To deal with this setting, we 
first augment $A$ to be a square, $n \times n$,  invertible matrix.  Of course, the issue is that 
one does  not know $A$,  and also need to figure out  how the augmentation  changes the product $Y$. 

We can solve this issue using a random augmentation. For instance, we can use   $n \times (n-m)$ gaussian matrix $B$ to  augment  $A$ to a square matrix $A'$
(the entries  in $B$ are iid standard gaussian). It is trivial that the augmented matrix has full rank with probability 1, since the probability that a gaussian vector belongs to any fixed hyperplane is zero. We can also augment $\x$ from an $m \times p$ matrix to a $n \times p$ matrix, $\x'$ by an $(n-m) \times p$ random matrix $Z$ with entries iid to those  of $\x$.  This augmentation process yields a matrix equation 

$$Y' = A' X' $$

\noindent  where  $\y' = \y + {E}$ where ${E} = BZ$  (Figure \ref{fig:rectangular}).  In practice, we can first generate $B, Z$, then compute $E := BZ$ and   construct  $\y' := Y+ E'$. Next 
then apply the ER-SpUD algorithm to  the equation $Y' = A' X'$  to  recover ${A}'$ and $\x'$ with high probability.  From these two
matrices, we can then deduce ${A}$ and $\x$.

Using a gaussian (or any continuous) augmentation is convenient, as the resulting matrix is obviously full rank.
However, it is, in some way, a cheat. Apparently, a gaussian number does not have any finite representation, thus it takes forever to read the input, let 
alone process it. A common practice is to truncate (as a matter of fact, the computer only generates a finite approximation of the 
gaussian numbers anyway), and hope that the truncation is fine for our purpose. But then we face a non-trivial  theoretical question 
to analyze this approximation. How many decimal places are enough ?  Even if we can prove a guarantee here, using it in practice 
would require computing with a matrix with many long entries, which significantly  increases the running time.

We can avoid this problem by using  random matrices with discrete distributions, such as $\pm 1$.  
The technical issue now is to prove the full rank property.
This is a highly non-trivial problem,but luckily was taken care of in the following result of Bourgain, Vu, and Wood \cite{BVW}. 

\begin{theorem} \label{BVW} 
For every $\epsilon > 0$ there exists $\delta > 0$ such that the following holds.  Let 
$N_{f,n}$ be an $n$ by $n$ complex matrix in which $f$ rows contain fixed, non-random entries
and where the other rows contain entries that are independent discrete random variables.  If the 
fixed rows have co-rank $k$ and if for every random entry $\alpha$, we have $max_x \Prob(\alpha = x)
\leq 1-\epsilon$, then for all sufficiently large $n$
$$
\Prob(N_{f,n} \text{ has co-rank} > k) \leq (1-\delta)^{n-f}.
$$
\end{theorem}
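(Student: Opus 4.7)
The plan is to expose the $n-f$ random rows one at a time and control, at each step, the probability that the new row fails to increase the rank of what has been assembled so far (starting from the span $W_0$ determined by the fixed rows). The basic tool is the classical Odlyzko lemma: for any proper subspace $V\subsetneq\mathbb{C}^n$ and any random row $\rho=(\alpha_1,\dots,\alpha_n)$ with independent entries satisfying $\max_x\Pr(\alpha_i=x)\le 1-\epsilon$, one has $\Pr(\rho\in V)\le 1-\epsilon$. The one-line proof is to pick a nonzero $v\perp V$ with $v_j\neq 0$; membership $\rho\in V$ then forces $\alpha_j$ to a single value in terms of the other entries, which happens with probability at most $1-\epsilon$ by hypothesis. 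Applied step-by-step, this already implies that with positive probability each new row escapes the current span and the rank defect drops by one.

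A naive iterate of Odlyzko only gives a bound of the form $1-\epsilon^{n-f}$, which is much weaker than the claimed $(1-\delta)^{n-f}$. To upgrade it, I would reformulate the event ``co-rank $>k$'' as: there exists a nonzero direction $v\in\mathbb{C}^n$, not accounted for by the null space of the fixed rows, that is orthogonal to every one of the $n-f$ random rows. For any such fixed $v$, the events $\{\rho_i\cdot v=0\}$ are independent across $i$, so by Odlyzko the probability of simultaneous orthogonality is at most $(1-\epsilon)^{n-f}$. The question then becomes whether one can union-bound cleanly over the continuum of candidate $v$.

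The union bound over directions is the crux of the argument and the main obstacle, and this is where I would follow the Tao--Vu / Bourgain--Vu--Wood dichotomy. I would split the sphere of normal directions into a compressible part (vectors essentially supported on few coordinates or of small arithmetic complexity) and an incompressible part. The compressible family, after discretization, has only subexponentially many members in $n$, so $(1-\epsilon)^{n-f}$ times its cardinality is still of the desired form $(1-\delta)^{n-f}$ for a slightly smaller $\delta$. For the incompressible part, the per-row anti-concentration estimate improves strictly below $1-\epsilon$: an inverse Littlewood--Offord statement of Halász/Tao--Vu type guarantees that if $\Pr(\rho\cdot v=0)$ is close to $1-\epsilon$ then $v$ must be arithmetically structured, contradicting incompressibility. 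This sharpened per-row factor leaves room to absorb a full exponential union bound over an $\eta$-net on the incompressible sphere. Combining the two regimes yields $(1-\delta)^{n-f}$, with the quantitative inverse Littlewood--Offord estimate in the discrete setting being where all of the real technical work lives.
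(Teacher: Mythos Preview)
The paper does not give a proof of this theorem; it is quoted from Bourgain--Vu--Wood and invoked only as a black box, to guarantee that augmenting the rectangular dictionary $A$ by a random $\pm 1$ block yields a nonsingular square matrix with high probability. There is therefore no in-paper argument to compare your sketch against.

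That said, your outline does track the architecture of the actual Bourgain--Vu--Wood proof (and its Kahn--Koml\'os--Szemer\'edi and Tao--Vu antecedents): pass to a normal direction $v$, split into structured versus unstructured $v$, handle the structured side by a sub-exponential net together with Odlyzko's per-row bound, and bring in inverse Littlewood--Offord machinery on the unstructured side; you are also right that the quantitative inverse theorem is where the hard work lives. Two points need tightening if you ever want to turn this into a proof. First, your restatement of ``co-rank $>k$'' is off: every null vector of $N_{f,n}$ already lies in the null space $U$ of the fixed block, so the phrase ``not accounted for by the null space of the fixed rows'' is vacuous; the correct restatement is that the $n-f$ random rows are linearly dependent as functionals on $U$, equivalently that some random row lies in the span of the fixed rows together with the remaining random rows. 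Second, the incompressible regime is not handled by a direct $\eta$-net union bound over the sphere. The event $\{\rho\cdot v=0\}$ is not stable under perturbing $v$, so one cannot pass from a net point to a nearby genuine null direction, and even ignoring that, the per-row anti-concentration for a generic incompressible $v$ is only polynomially small (of order $n^{-1/2}$), which cannot absorb an exponential-size net. The genuine mechanism stratifies candidate normals by the value of their L\'evy concentration function $\rho(v)=\max_x\Pr(\rho\cdot v=x)$ and uses the inverse theorem to \emph{count} how many directions can occupy each stratum; the exponential saving then comes from balancing that count against the factor $\rho(v)^{\,n-f-1}$ paid by requiring the remaining random rows all to be orthogonal to such a $v$.
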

Letting, $k=0$ and $f=m$, the result shows that if we augment ${A}$ by 
$n \times (m-n)$ random Bernoulli matrix, this new matrix, 
${A}'$, will be nonsingular with high probability, given that $n-m =\omega(1) $. 

\begin{figure}[h] 
    \centering
    \includegraphics[width=\textwidth]{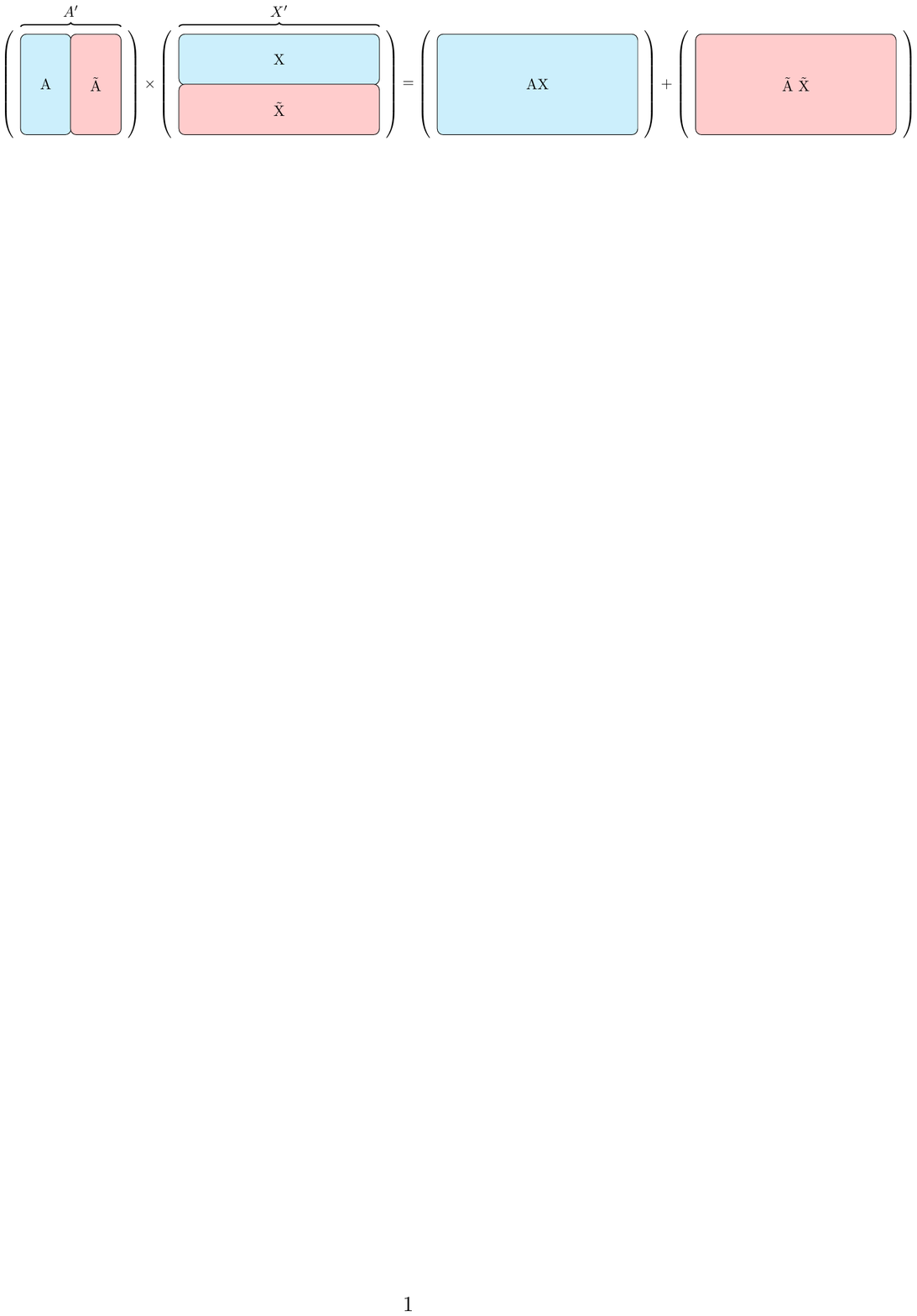}
    \caption{Rectangular A with $n > m$}
    \label{fig:rectangular}
\end{figure}
We summarize our reasoning in the following algorithm.
\begin{algorithm}[H]
\caption{Rectangular Algorithm}
  \begin{algorithmic}[1]
     \State Generate a $(n-m) \times p$ matrix $Z$ with iid random variables that agree with 
            the model for $X$.  
     \State Generate a $n \times (n-m)$ matrix $B$ with iid entries (either Gaussian or Rademacher).    
     \State Run {ER-SpUD} on $Y' = Y + BZ$
     \State Remove the rows of $A'$ and the columns of $X'$ from the output of ER-SpUD.
  \end{algorithmic}
\end{algorithm}

\section{Optimal bound for  very sparse random matrices} \label{verysparse}

In this section, we discuss Theorem \ref{theorem:main3}.
We present a simple algorithm (see below) and use this algorithm to prove Theorem \ref{theorem:main3}, obtaining the optimal bound 
$p = C n \log n$. 

\begin{algorithm}[H] \label{Alg:sparse}
\caption{Very-sparse Algorithm}
  \begin{algorithmic}[1]
     \State Partition the columns of Y into a minimum number of groups $G_i$ whose members are multiples of each other. 
     \State Choose representatives of those $G_i$ with more than two members to be the columns of A up to scaling.
  \end{algorithmic}
\end{algorithm}

\textit{Proof of Theorem \ref{theorem:main3}.}   
Since $A$ is nonsingular, any two columns of $Y$ that are multiples of each other must be linear 
combinations of the same columns of $A$.  For a group $G_i$ to have more than two members
would require that there be more than two columns in $X$ with their non-zero entries in the same
rows.    
\begin{definition}
We say that a set of columns are {aligned} if they each have 
more than one nonzero entry and their non-zero entries occur in the same positions.
\end{definition}

\begin{lemma}\label{very sparse}
The probability that $X$ has more than two aligned columns is $o(1)$.
\end{lemma}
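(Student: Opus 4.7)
The plan is to bound the probability of having more than two aligned columns by a weaker event that is easier to analyze: the existence of three distinct columns of $X$ and two distinct row indices $i_1,i_2\in[n]$ such that all three columns are simultaneously nonzero at both $i_1$ and $i_2$. This weaker event contains the target event, because if three columns share a common support of size at least $2$, then in particular they are simultaneously nonzero at some pair of positions lying in that common support.

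For any fixed triple of columns and any fixed pair of rows, the event that all six of the specified entries of $X$ are nonzero has probability at most $\theta^6$, by independence of the entries $x_{ij}=\chi_{ij}\xi_{ij}$ and the trivial bound $\Prob(x_{ij}\ne 0)\le\theta$. The union bound over $\binom{p}{3}$ triples of columns and $\binom{n}{2}$ pairs of rows then yields
$$
\Prob(X \text{ has more than two aligned columns}) \;\le\; \binom{p}{3}\binom{n}{2}\theta^6 \;\le\; \frac{p^3 n^2}{12}\cdot\frac{c^6}{n^6} \;=\; O\!\left(\frac{p^3}{n^4}\right).
$$
Under the hypothesis $p\le Cn\log n$ of Theorem \ref{theorem:main3}, this evaluates to $O(\log^3 n / n)=o(1)$, which is the desired conclusion.

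I do not expect any substantive obstacle here. The argument works because, at sparsity $\theta=c/n$, three columns all simultaneously nonzero at two common positions already costs a factor of $\theta^6=O(n^{-6})$ in probability, while the combinatorial enumeration of such configurations contributes only $\binom{p}{3}\binom{n}{2}=O(p^3 n^2)$; the resulting estimate has a factor $p^3/n^4$ to spare at $p=O(n\log n)$. The bound is robust and would survive considerably weaker hypotheses on $p$; no finer control of the shape or size of the common support is needed, since restricting attention to just two coincident nonzero positions already gives enough room.
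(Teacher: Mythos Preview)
Your argument is correct and a bit cleaner than the paper's. The paper fixes three columns, conditions on their common support having size $k$, invokes a Poisson approximation for the number of nonzeros in a column, and then sums over $k\ge 2$ the probability that all three columns share a specific $k$-element support; together with a union bound over column triples this gives a bound of order $p^3/n^4$. You bypass both the conditioning on $k$ and the Poisson step: by relaxing to the superset event ``three columns are simultaneously nonzero at some fixed pair of rows,'' you land directly on $\binom{p}{3}\binom{n}{2}\theta^6=O(p^3/n^4)$ via a one-line union bound. Both routes arrive at the same estimate, but yours is more elementary and avoids the approximation altogether.

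One minor quibble: you invoke ``the hypothesis $p\le Cn\log n$ of Theorem~\ref{theorem:main3},'' but that theorem actually assumes $p\ge Cn\log n$. Your bound (and the paper's) needs $p^3/n^4=o(1)$, i.e.\ $p=o(n^{4/3})$, so the lemma as stated tacitly requires $p$ not to be too large; for larger $p$ one would simply subsample down to $\Theta(n\log n)$ columns before running the algorithm, so this does not affect the theorem itself.
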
    
Thus, the algorithm is likely to yield only columns of $A$.  We now need to show that
all the columns of $A$ will be outputted with
high probability.  
\begin{definition}
We say the column $\mathbf{a}$ of $A$ is {k-represented} 
if some group $G_i$ consists of multiples of 
$\mathbf{a}$ and $|G_i| = k$.  In particular, if no multiple of the $j$th column, $\mathbf{a_j}$, shows up in the 
columns of $Y$ then $\mathbf{a_j}$ is $0$-represented.
A column is {well represented} if it is
$k$-represented for $k>2$.  \end{definition}

Notice that the algorithm will output a multiple of every column that 
is well represented.

The following lemma finishes the proof of Theorem 
\ref{theorem:main3}.

\begin{lemma}\label{coupon}
The probability that every column $\mathbf{a_i}$ is well represented is $1-o(1)$.
\end{lemma}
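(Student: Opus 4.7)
The plan is to reduce Lemma \ref{coupon} to a coupon-collector style Chernoff plus union-bound calculation, estimating for each $j$ the number of columns of $X$ whose unique nonzero entry lies in row $j$.

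First I would unpack the definition of "well represented" using the invertibility of $A$ together with Lemma \ref{very sparse}. Because the columns of $A$ are linearly independent, a column $Y_k = AX_k$ is a nonzero scalar multiple of $\mathbf{a_j}$ if and only if $X_k$ has support exactly $\{j\}$, and two $X$-columns with the same multi-element support produce proportional $Y$-columns only when their $\xi$-values match up to a common scalar. By Lemma \ref{very sparse}, with probability $1-o(1)$ no three columns of $X$ share a multi-element support. Hence, on this event, the groups $G_i$ of size at least $3$ produced by the algorithm are in bijection with the sets
\[
S_j := \{k : \operatorname{supp}(X_k) = \{j\}\}, \qquad j \in \{1,\ldots,n\},
\]
so that $\mathbf{a_j}$ is well represented if and only if $N_j := |S_j| \geq 3$.

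Next I would estimate $N_j$. Each column of $X$ independently satisfies $\operatorname{supp}(X_k) = \{j\}$ with probability $q := \theta(1-\theta)^{n-1}$; with $\theta = c/n$ this is bounded below by $c'/n$ for some constant $c' = c'(c) > 0$. Thus $N_j \sim \operatorname{Binomial}(p,q)$ has mean $\mu \geq c'C \log n$. Taking $C$ sufficiently large, Chernoff's inequality gives
\[
\Prob(N_j \leq 2) \leq \Prob(N_j \leq \mu/2) \leq e^{-\mu/8} \leq n^{-c'C/8},
\]
and choosing $C$ so that $c'C/8 \geq 2$, a union bound over $j \in \{1,\ldots,n\}$ yields $\Prob(\exists j:\ N_j \leq 2) \leq 1/n = o(1)$. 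Combined with the $o(1)$ failure event of Lemma \ref{very sparse}, this will give the claimed conclusion.

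The only mildly subtle step in this plan is the first one: verifying that, once the "three aligned columns" bad event is excluded, the groups of size at least $3$ in the algorithm's partition coincide exactly with the $S_j$'s. This uses linear independence of the columns of $A$: any nontrivial linear combination of two or more distinct $\mathbf{a_i}$'s fails to be parallel to a single $\mathbf{a_j}$, so $Y$-columns arising from $X$-columns with support of size $\geq 2$ cannot contaminate any "multiples of $\mathbf{a_j}$" group. After this identification, everything reduces to a routine binomial tail estimate, and no further machinery is needed.
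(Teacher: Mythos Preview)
Your argument is correct and is essentially the paper's: both show that for each $j$ the number $N_j$ of columns of $X$ with support exactly $\{j\}$ is at least $3$ with probability $1-O(\exp(-cp/n))$, then union-bound over the $n$ columns of $A$; the paper sums the exact binomial probabilities of $0$, $1$, $2$ hits (via a Poisson approximation) while you invoke Chernoff, which is an inessential difference. One unnecessary detour in your plan: you do not need Lemma~\ref{very sparse} here, since the implication $|S_j|\ge 3 \Rightarrow \mathbf{a}_j$ is well represented follows directly from invertibility of $A$ (the nonzero $Y$-columns proportional to $\mathbf{a}_j$ are exactly those indexed by $S_j$, and they form a single group), and that is the only direction required for the lemma---indeed the paper proves Lemma~\ref{coupon} without appealing to Lemma~\ref{very sparse}.
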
 


\subsection{Proofs of Sparse Algorithm} \label{technical}
\textit{Proof of Lemma \ref{very sparse}.}
Given the choice of $\theta$, we know that 
the number of nonzero entries in any column of $X$ will converge to the Poisson distribution.
We ignore the $o(1/n)$ error terms from this approximation in later calculations to alleviate clutter.   
To calculate the probability, we
condition on the number of
nonzero entries, and then we bound 
the probability that three specific columns 
have the required property, and finally we use the 
union bound.  This yields an upper bound of 
$$
  {n \choose 3}\sum_{k \geq 2} \frac{e^{-3c}}{(k!)^3} \frac{1}{{n \choose k}^2} = o(1)
$$
\QED
\newline
\newline
\textit{Proof of Lemma \ref{coupon}.}
By the union bound, 
$$
\Prob(\exists i \text{ such that } \mathbf{a_i} \text{ is not well represented}) \leq n \Prob(\mathbf{a_1} \text{ is not well represented})
$$
Partitioning into disjoint events yields
$$
\Prob(\mathbf{a_1} \text{ is not well represented}) = \sum_{j=0}^2\Prob(\mathbf{a_1} \text{ is } \text{$j$-represented})
$$
Notice that a multiple of $\mathbf{a_1}$, say  $a*\mathbf{a_1}$, appears as a column of $Y$ if and only if $a*\mathbf{e_1} = (a,0,0, \dots ,0)^T$, with $a \neq 0$,
is $X^j$, the $j$th column of $X$, for some $j$.
Now, using the Poisson approximation we can bound each term in the summand.
For example, for the probability of being $0$-represented, we can divide into the case
that $X^i$ does not have exactly one non-zero element and the case that $X^i$ has exactly one
non-zero term but not in the first row.  We use $C$ to indicate an absolute constant which may
change with each appearance.  
$$
\Prob(\mathbf{a_1} \text{ is } \text{$0$-represented}) \leq \left( (1 - c e^{-c}) + e^{-c} \frac{n-1}{n} \right)^p \leq C \exp(-Cp/n)
$$
Similarly,
$$
\Prob(\mathbf{a_1} \text{ is } \text{$1$-represented}) \leq n \left( \frac{c e^{-c}}{n} \right)
 \left( (1 - c e^{-c}) + e^{-c} \frac{n-1}{n} \right)^{p-1} \leq C \exp(-Cp/n)
$$
and
$$
\Prob(\mathbf{a_1} \text{ is } \text{$2$-represented}) \leq {n \choose 2} \left( \frac{c e^{-c}}{n} \right)^2
 \left( (1 - c e^{-c}) + e^{-c} \frac{n-1}{n} \right)^{p-2} \leq C \exp(-Cp/n)
$$
Thus, 
$$
\Prob(\mathbf{a_1} \text{ is not well represented}) \leq C \exp(\log n - Cp/n) = o(1)
$$
for $p = C' n \log n$ for a large enough $C'$.
\QED

\section{Proof of Theorem \ref{theorem:main4} } \label{theoreticalbound} 

\subsection{Lemmas Independent of Symmetry}
We first state the necessary lemmas from \cite{SWW} whose proofs do not use the symmetry of the random variables.

\begin{lemma} \label{rowspace}
If $rank(X)=n$, $A$ is nonsingular, and $Y$ can be decomposed into $Y=A'X'$, then the row spaces of $X'$, $X$, and $Y$ are the same.
\end{lemma}

The general idea is to show that the sparsest vectors in the row-span of $Y$ are the rows of $X$. 
Since all of the rows of $X'$ lie in the row-span of $Y$, intuitively, they can be sparse only when 
they are multiples of the rows of $X$.  Naively, this is because rows of $X$ are likely to have
nearly disjoint supports.  Thus, any linear combination of them will probably increase the 
number of nonzero entries.

\begin{lemma}
Let $\Omega$ be an $n \times p$ Bernoulli($\theta$) matrix with $1/n < \theta < 1/4$.  For each set
$S \subseteq [n]$, let $T_S \subseteq [p]$ be the indices of the columns of $\Omega$ that have at least
one non-zero entry in some row indexed by $S$.
\begin{enumerate}[(a)]
\item For every set $S$ of size 2,
$$
\Prob(|T_S| \leq (4/3)\theta p) \leq \exp\left(-\frac{\theta p}{108}\right)
$$
\item For every set $S$ of size $\sigma$ with $3 \leq \sigma \leq 1/\theta$,
$$
\Prob(|T_S| \leq (3\sigma/8)\theta p) \leq \exp\left(-\frac{\sigma \theta p}{64}\right)
$$
\item For every set $S$ of size $\sigma$ with $1/\theta \leq \sigma$,
$$
\Prob(|T_S| \leq (1-1/e)p/2) \leq \exp\left(-\frac{(1-1/e)p}{8}\right)
$$
\end{enumerate}
\end{lemma}

\subsection{Generalized Lemmas}
We will use a result of \cite{rudelson2008littlewood}.
\begin{lemma} \label{rudelson}
Let $\xi_1, \dots, \xi_n$ be independent centered random variables with variances at least $1$ and fourth moments bounded by $B$.  Then there exists $\nu \in (0,1)$ depending only on $B$, such that for
every coefficient vector $a = (a_1, \dots, a_n) \in S^{n-1}$ the random sum $S = \sum_{k=1}^n a_k \xi_k$
satisfies
$$
\Prob (|S| < \frac{1}{2}) \leq \nu
$$
\end{lemma}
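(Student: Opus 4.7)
\textbf{Proof plan for Lemma \ref{rudelson}.} My approach would be the classical Paley--Zygmund (second moment / fourth moment) route to small ball probability. The point is that if $S$ has a controlled lower bound on its second moment and an upper bound on its fourth moment depending only on $B$, then one can conclude that $|S|$ is bounded away from zero with positive probability, uniformly in the coefficient vector $a \in S^{n-1}$.

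First, I would compute $\E S^2$. Since the $\xi_k$ are independent and centered,
\[
\E S^2 \;=\; \sum_{k=1}^n a_k^2 \Var(\xi_k) \;\ge\; \sum_{k=1}^n a_k^2 \;=\; 1,
\]
using $\Var(\xi_k) \ge 1$ and $\|a\|_2=1$. Next I would bound $\E S^4$ from above. Expanding the fourth power and using independence and centering, only the diagonal and the ``square of a square'' terms survive:
\[
\E S^4 \;=\; \sum_{k=1}^n a_k^4 \, \E \xi_k^4 \;+\; 3 \sum_{i \neq j} a_i^2 a_j^2 \, \E \xi_i^2 \, \E \xi_j^2 .
\]
The diagonal contribution is at most $B \sum_k a_k^4 \le B$. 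For the cross term, Cauchy--Schwarz gives $\E \xi_k^2 \le (\E \xi_k^4)^{1/2} \le \sqrt{B}$, so the off-diagonal contribution is at most $3B \left(\sum_k a_k^2\right)^2 = 3B$. Altogether $\E S^4 \le 4B$.

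Now I would apply the Paley--Zygmund inequality to the nonnegative random variable $S^2$: for $\theta \in (0,1)$,
\[
\Prob\!\left( S^2 > \theta \, \E S^2 \right) \;\ge\; (1-\theta)^2 \, \frac{(\E S^2)^2}{\E S^4}.
\]
Choosing $\theta = 1/4$ and plugging in the bounds $\E S^2 \ge 1$ and $\E S^4 \le 4B$, I obtain
\[
\Prob\!\left( S^2 > 1/4 \right) \;\ge\; \frac{9}{16} \cdot \frac{1}{4B} \;=\; \frac{9}{64B}.
\]
Thus $\Prob(|S| < 1/2) \le 1 - \frac{9}{64B} =: \nu$, which is a constant in $(0,1)$ depending only on $B$. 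This gives the claim.

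The main step to be careful about is the fourth-moment bound, specifically the cross term: one must not assume a priori that $\E \xi_k^2 \le 1$ or any such normalization, so one has to extract control of $\E \xi_k^2$ from the fourth-moment hypothesis via Cauchy--Schwarz. Everything else is a direct computation; there is no obstruction tied to the number of summands $n$ or to the specific direction $a$, which is exactly what makes the conclusion uniform over $S^{n-1}$.
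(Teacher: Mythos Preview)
Your proof is correct. The paper itself does not give a proof of this lemma; it simply cites it as a result of Rudelson and Vershynin, so there is nothing to compare against in the paper. Your Paley--Zygmund argument is in fact the standard route (and essentially the one used in the cited source): the only subtlety is exactly the one you flag, namely that $\E\xi_k^2$ must be controlled via $(\E\xi_k^4)^{1/2}\le \sqrt{B}$ rather than assumed bounded, and you handle this correctly.
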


\begin{definition}
We call a vector $\alpha \in \mathbb{R}^n$ fully dense if for all $i \in [n]$, $\alpha_i \neq 0$.
\end{definition}
\begin{lemma}\label{generalized}
For $b>s$, let $H \in \mathbb{R}^{s \times b}$ be a matrix with one nonzero in each column.  Let $R$ be a s-by-b matrix with independent centered random variables with 
variances at least $1$ and bounded fourth moments.  
Define $U = H \odot R$ Then the probability that the left nullspace of $U$ contains a fully dense vector is at most 
\end{lemma}
\textit{ Proof of Lemma \ref{generalized}.}  Let $U = [u_1|\dots|u_b]$ denote the columns of $U$ and 
for each $j \in [b]$, let $N_j$ be the left nullspace of $[u_1|\dots|u_j]$.  We show that with high 
probability $N_b$ cannot contain a fully dense vector.
This can be done by showing that if $N_{j-1}$ contains a fully dense vector then with probability 1/2 the dimension of $N_j$ is less than the dimension of $N_{j-1}$.  Formally, consider a fully dense
vector $\alpha \in N_{j-1}$.  If $u_j$ contains only one nonzero entry, then $\alpha^T u_j \neq 0$ 
reducing the dimension of $N_j$.  If $u_j$ contains more than one non-zero entry, then Lemma
\ref{rudelson} implies that the probability, over the choice of entries of $R_j$, that $\alpha^T u_j = 0$ is less than $1/2$.   

Note that the dimension cannot decrease more than $s$ times.  For $N_b$ to contain a fully dense vector, there must be at least $b-s$ columns for which the dimension of the nullspace does nto decrease.  Let $F \subset [b]$ have size $b-s$.  The probability that for every $j \in F$, $N_{j-1}$ contains a fully dense vector and that the dimension of $N_j$ equals the dimension of $N_{j-1}$ is at most $2^{-b+s-1}$.  By the union bound, the probability that $N_b$ contains a fully dense vector is at most 
$$
{b \choose {b-s}}2^{-b+s} \leq \left(\frac{eb}{s} \right)^s 2^{-b+s} \leq 2^{-b+s \log(e^2b/s)}
$$ 
\QED

The proofs of the following lemmas are identical to those in \cite{SWW} except that they now use our
more general Lemma \ref{generalized} along with the lemmas in the previous section.
\begin{lemma}
For $t>200s$, let $\Omega \in  \{0,1\}^{s \times t}$ be any binary matrix with at least one nonzero 
in each column.  Let $R \in \mathbb{R}^{s \times t}$ be a random matrix whose entries are iid random variables, with $\Prob(R_{ij}=0)=0$, and let $U=\Omega \odot R$.  Then, the probability that there exists a fully-dense vector $\alpha$ for which $\|\alpha^T U\|_0 \leq t/5$ is at most $2^{-t/25}$.
\end{lemma}

\begin{lemma} \label{Sp7}
If $X = \Omega \odot R$ follows the Bernoulli-Subgaussian model with $\Prob(R_{ij}=0)=0$, $1/n < \theta < 1/C$ and $p > C n \log n$, then the probability that there is a vector $\alpha$ with support of size larger than $1$ for which 
$$
\|\alpha^T X\|_0 \leq (11/9) \theta p
$$
is at most $\exp(-c\theta p)$, and $C,c$ are numerical constants.  
\end{lemma}

\subsection{Proof of Theorem \ref{theorem:main4}}
Say $Y$ can be decomposed as $A' X'$.
From Lemma \ref{Sp7}, we know that with probability at most $\exp(-c \theta p)$, any linear combination of two or more rows of $X$ has at least $(11/9)  \theta p$ nonzeros.  
By a simple Chernoff bound, the probability that any row of $X$ has more than $(10/9) \theta p$
nonzero entries is bounded by $n \exp(-\theta p/243)$.  Thus, the rows of $X$ are likely the sparsest in $row(X)$.

On the previous event of probability at least $1-\exp(-c\theta p)$, $X$ does not have any left null vectors with more than one nonzero entry.  Therefore, if the rows of $X$ are nonzero, $X$ will have no nonzero vectors in its left nullspace.  The probability that all of the rows of $X$ are nonzero
is at least $1-n(1-\theta)^p \geq 1-n\exp(-cp)$.  From this, by Lemma \ref{rowspace}, we get $row(X) = row(Y)=row(X')$. 
Hence, we can conclude that every row in $X'$ is a scalar multiple of a row of $X$.
\QED


\section{Numerical Simulations} \label{numerical}
We demonstrate that the efficiency of the ER-SpUD algorithm is not improved with larger $p$ values 
beyond the threshold conjectured.  In Figure \ref{fig:erspud}, we  have chosen 
$A$ to be an $n \times n$ matrix of independent $N(0,1)$ random variables.  The $n \times p$ matrix 
$X$ has $k$ randomly chosen non-zero entries which are Rademacher.  The graph on the left of Figure \ref{fig:erspud}
is generated with $p = 5 n \log n$ and the one on the right with $p = 5 n^2 \log^2 n$.  For both graphs,
$n$ varies from $10$ to $60$ and $k$ from $1$ to $10$.  
Accuracy is measured in terms of relative error: 
$$
re(A',A) = min_{\Pi,\Lambda} \|A'\Lambda \Pi - A \|_F / \|A\|_F
$$
The average relative error over ten trials is reported.

\begin{figure}[h]
    \centering
    \includegraphics[scale = .6]{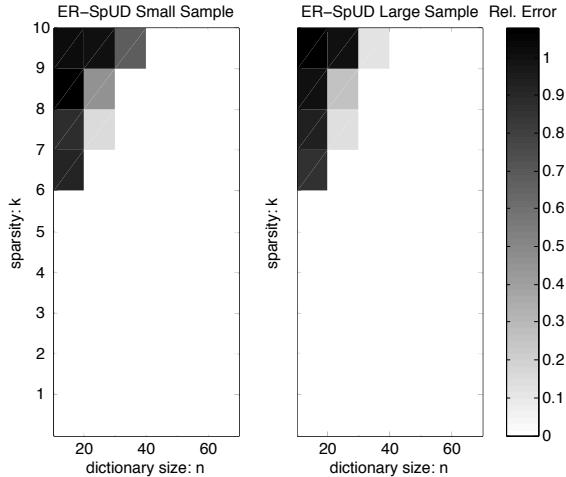}
    \caption{Mean relative errors of ER-SpUD with $p =5n \log n$ versus $p = 5 n^2 \log^2 n$}
    \label{fig:erspud}
\end{figure}

We then ran our Algorithm \ref{Alg:sparse} in a sparse regime to compare its performance with that of ER-SpUD (see Figure \ref{fig:comparison}.  $A$ was as before, but since our algorithm
relies on the appearance of 1-sparse columns in $X$, we cannot fix sparsity as in our first experiments.
Rather, we vary the Bernoulli parameter $\theta$ from $0.02$ to $0.18$, and the $\chi_{ij}$ are Rademacher.
One can see the expected phase transition at which point the matrix $X$ is no longer
sparse enough for our algorithm.  In the regime for which the algorithm was designed, the relative error of our output is on the 
same order as that of ER-SpUD.  Furthermore, our algorithm runs much quicker and has no trouble with inputs of size up to $n=500$. (The numerical experiments were completed on a Macbook Pro.)

Finally, we compare the outcome of our optimal $p$ value with that of a much larger sample size ($p = O(n^2 \log^2 n)$). We let $n$ range from $10$ to $200$ and $\theta$ from $0.01$ to $0.08$.  Figure \ref{fig:sparse}
shows that the efficacy 
of the algorithm is not much improved despite the dramatic increase in $p$.  The threshold for failure is identical.     
\begin{figure}[h]
    \centering
    \includegraphics[scale = .7]{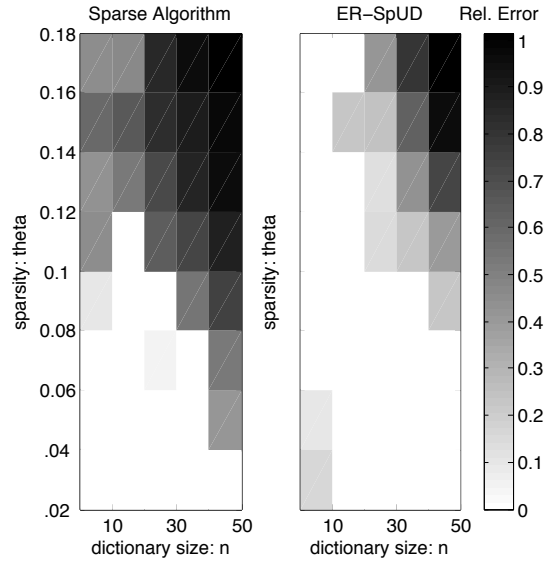}
    \caption{Mean relative errors with varying sparsity $\theta$.  Here, $p=5 n \log n$.}
    \label{fig:comparison}
\end{figure}

\begin{figure}[h]
    \centering
    \includegraphics[scale = .6]{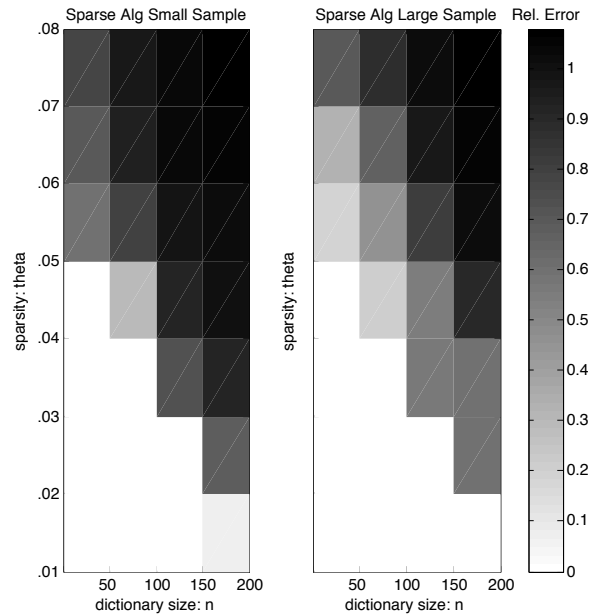}
    \caption{Mean relative errors of Algorithm \ref{Alg:sparse} with $p =5n \log n$ versus $p = 5 n^2 \log^2 n$}
    \label{fig:sparse}
\end{figure}

\bibliographystyle{plain}
\bibliography{Journalversion}

\appendix

\end{document}